\documentclass[preprint,12pt]{amsart}
\usepackage{amsmath}
\usepackage{amssymb}
\usepackage[mathscr]{eucal}
\usepackage{graphicx}
\usepackage{amsmath}
\usepackage{amsfonts}
\usepackage{fullpage}
\usepackage{times}
\usepackage{bbm}
\usepackage{epsfig}
\usepackage[usenames]{color}
\usepackage[pagebackref]{hyperref}    
\newtheorem{definition}{Definition}

\newcommand{\ch}{ \mathsf{ch}}

\newcommand{\CF}{ \mathsf{CF}}
\newcommand{\SF}{ \mathsf{SF}}
\newtheorem{theorem}{Theorem}
\newtheorem{lemma}[theorem]{Lemma}
\newtheorem{corollary}[theorem]{Corollary}

\newtheorem{remark}[theorem]{Remark}

\newcommand{\wi}{\mathrm{Wi\_Index}}
\newcommand{\type}{\mathrm{Type}}
\newcommand{\GGS}{\mathrm{GGS}}
\newcommand{\GTS}{\mathrm{GTS}}
\newcommand{\GMF}{\mathrm{GMF}}

\newcommand{\im}{\mathrm{imm}}
\newcommand{\od}{ \overline{\mathrm{imm}}}

\newcommand{\sP}{  \mathcal{ PO}}
\newcommand{\sO}{  \mathcal{ O}}

\newcommand{\RR}{ \mathbb{R}}
\newcommand{\ZZ}{ \mathbb{Z}}
\newcommand{\QQ}{ \mathbb{Q}}

\newcommand{\CC}{ \mathbb{C}}

\newcommand{\nhalf}{\lfloor n/2 \rfloor}

\newcommand{\comment}[1]{}

\newcommand{\SSS}{\mathfrak{S}}

\newcommand{\id}{\mathsf{id}}

\newcommand{\perm}{  \mathrm{perm}}

\usepackage{mathtools}
\usepackage{footnote}


\begin{document} 
		\title{Laplacian Immanantal Polynomials of  a Bipartite Graph and Graph Shift Operation }
		\author{Mukesh Kumar Nagar}
		\email{mukesh.kr.nagar@gmail.com}
		\address{Department of Mathematics\\
			Jaypee Institute of Information Technology   \\
			Noida, India - 201309.}



\maketitle	

\begin{abstract}
Let $G$ be a bipartite graph on $n$ vertices with the  Laplacian matrix $L_G$. 
When $G$ is a tree, inequalities involving coefficients of immanantal polynomials of $L_G$ are known as we go up $\GTS_n$ 
poset of unlabelled trees with $n$ vertices. We extend $\GTS$ operation on a tree to an arbitrary graph,  
we call it generalized graph shift (hencefourth $\GGS$) operation.
Using $\GGS$ operation, we  generalize these known inequalities associated with trees to bipartite graphs.
Using vertex orientations of $G$, we give a combinatorial interpretation for each coefficient of the Laplacian immanantal polynomial of  $G$ which is used to prove counter parts of Schur theorem and Lieb's conjecture for these coefficients. 
We define $\GGS_n$ poset on  $\Omega_{C_k}^v(n)$, the set of unlabelled unicyclic graphs with $n$ vertices where each vertex of the cycle $C_k$ has degree $2$ except one vertex $v$. 
Using $\GGS_n$ poset on  $\Omega_{C_{2k}}^v(n)$, we solves an extreme value problem of finding the max-min pair in $\Omega_{C_{2k}}^v(n)$ for each coefficient of the generalized Laplacian polynomials. At the end of this paper, we also discuss the monotonicity of the spectral radius and the Wiener index of an unicyclic graph when we go up along $\GGS_n$ poset of $\Omega_{C_k}^v(n)$.
\end{abstract}

	{\bf KEYWORDS : }
	 $\GTS$ poset, bipartite graphs, $\GGS$ operation, vertex orientation,  immanantal polynomial.  \\
{\bf AMS CLASSIFICATION :}	05C05  06A06  15A15  

\section{Introduction}
\label{sec:intro}

For a positive integer $n$, let $\SSS_n$  denote the symmetric group on 
$[n]=\{1, 2, \ldots , n\}$. 
A partition $\lambda$ of $n$ is denoted by $\lambda \vdash n$ and 
it is written using the exponential notation with multiplicities of parts written as exponents. Thus, if $i$ appears $n_i$ times in $\lambda $ then $\lambda=1^{n_1}2^{n_2}3^{n_3}\ldots$ and $n_1+2n_{2}+3n_3+\cdots=n.$ 
For $\lambda \vdash n$, let
$\chi_{\lambda}^{}$ denote  the corresponding irreducible character of  
$\SSS_n$ over $\CC$, the set of complex numbers,  
(see the textbook by Sagan \cite{sagan-book} as a 
reference for the theory of characters of $\SSS_n$).  
Let $A=(a_{i,j}) \in {\mathbb M}_n(\CC)$, 
where  ${\mathbb M}_n(\CC)$ represents the set of all  $n \times n$ matrices with complex entries. 
Then,  the normalized immanant function  of $A$ associated with $\lambda \vdash n$, denoted $\od_{\lambda}(\cdot)$ is defined as 
\begin{equation}
\label{eqn:def_imm}
\od_{\lambda}(A)=\dfrac{1}{\chi_{\lambda}^{}(\id)} \sum_{\psi \in \SSS_n} \chi_{\lambda}^{}(\psi) \prod_{i=1}^{n}a_{i,\psi(i)},
\end{equation}
where $\chi_{\lambda}^{}(\id)$ is the dimension of the
irreducible representation indexed by $\lambda$. The expression   
$\chi_{\lambda}^{}(\id)\od_{\lambda}(A)$
from Equation \eqref{eqn:def_imm}  is called the 
immanant function of $A$ and it is denoted as   $\im_{\lambda}(A)$. 
As $\chi_{1^n}^{}(\id)=1=\chi_{n}^{}(\id)$,   we see that 
$\od_{1^n}(A) = \det(A)$ 
and $\od_{n}(A) = \perm(A)$, where $\det(A)$ and  $\perm(A)$ are the 
determinant and the permanent of $A$, respectively.

Let $ {\mathbb H}_n(\CC)$ be the set of all $n\times n$ positive semidefinite Hermitian matrix over $\CC$. Then,   
Schur in \cite{schur-immanant-ineqs} showed that 
$\det(A)= \min \ \{\od_{\lambda}(A): \lambda\vdash n\}$ 
for all $A \in {\mathbb H}_n(\CC) $. 
Towards getting the maximum element in this set, a popular conjecture known as the ``permanental dominance conjecture'' was given by 
Lieb in~\cite{lieb-permanent-top}. It  states that $\perm (A)= \max \ \{\od_{\lambda}(A): \lambda\vdash n\}$ for all $A \in {\mathbb H}_n(\CC) $. 
This is still open. In this paper, we give a proof of  
this conjecture for the Laplacian matrix of a  
 bipartite graph, see Theorem \ref{thm:Lieb_conj_bip}.

For a simple graph $G$ with vertex set $[n]$, 
its Laplacian matrix $L_G^{}$ is defined by 
$L_G^{}=D-A(G)$, where $A(G)$ is the adjacency matrix of $G$ and 
$D$ is the diagonal matrix with vertex degrees on the main diagonal.  
Indexed by $\lambda \vdash n$, the Laplacian immanantal polynomial of $G$, 
denoted $\phi_{\lambda}^{}(L_G^{},x)$ is defined as $\phi_{\lambda}^{}(L_G^{},x)=\im_{\lambda}(xI-L_G^{})$. 
For $0\leq r \leq n$, let $b_{\lambda,r}(L_{G}^{})$ be the coefficient of 
$(-1)^r x^{n-r}$ in $\phi_{\lambda}(L_{G}^{},x)$, that is  
\begin{equation}
\label{eqn:def_lapl_imm_poly}
 \phi_{\lambda}(L_G^{},x)=\sum_{r=0}^{n} (-1)^r b_{\lambda,r}(L_G^{})x^{n-r}
\end{equation}


Let $T$ be a tree on $n$ vertices with  Laplacian matrix $L_T$. 
Let $S_n$ and $P_n$ be the star tree and the path tree on $n$ vertices respectively. 
When $\lambda=1^n \vdash n$, Gutman and Povlovic in \cite{gutman-pavlovic_laplacian_coeff} conjectured the following inequality 
which was proved by Gutman and Zhou \cite{gutman-zhao-connection-laplacian-spectra} 
and independently by Mohar \cite{mohar-laplacian-coeffs-acyclic-graphs} 
\begin{equation}
\label{eqn:gutman_pav_conj}
b_{1^n,r}^{}(L_{S_n}) \leq b_{1^n,r}^{}(L_{T}) \leq b_{1^n,r}^{}(L_{P_n}) \mbox{ for } r=0,1,2,\ldots,n.
\end{equation} 

The heart of this paper is the $\GGS$ (generalized graph shift) operation 
which is used to study the  monotonicity results of some graph-theoretical parameters. Some of them are discussed here, for instances, the spectral radius, Wiener index and coefficients of the Laplacian immanantal polynomials (see Theorems \ref{thm:main_thm} and \ref{thm:gen_poly_main_thm} and Corollary \ref{cor:min_max_pairs}). 

Kelmans \cite{kelmans} was the first who studied an  operation on graphs called Kelmans transformation, see Definition \ref{def:kelman}.  
This transformation increases the spectral radius 
and decreases the number of spanning trees 
(for more details see  Brown, Colbourn and Devitt \cite{brown-colbourn-devitt} and  Satyanarayana, Schoppman and Suffel \cite{satyanarayana-schoppman-suffel}. 
Similar type of transformation which is called generalized tree shift  
(abbreviated as $\GTS$ henceforth) operation  was defined by Csikv{\'a}ri in \cite{csikvari-poset1} to construct a poset called $\GTS_n$  
on the set of  unlabelled trees with $n$ vertices, see Definition \ref{def:gts}.  
Later, this $\GTS$ operation was studied in \cite{csikvari-poset2,mukesh-eigenvalue,mukesh-siva-immanantal_polynomial,mukesh-siva-GMF} in order to discuss monotonicity properties of some 
graph-theoretical parameters (see Table \ref{tab:max-min}). Csikv{\'a}ri showed  that the star tree  $S_n$ and the path tree 
$P_n$ are the only maximal and minimal elements of $\GTS_n$  respectively.  
Thus for all monotonicity results on $\GTS_n$, 
the max-min pair is either $(P_n,S_n)$ or $(S_n,P_n)$ among all trees with $n$ vertices.
Among other results, he proved  
that going up along $\GTS_n$ decreases each coefficient 
of the characteristic polynomial of $L_T$ in absolute value and 
hence the max-min pair for this algebraic parameter is $(P_n,S_n)$. 
Thus Csikv{\'a}ri's result is more general than the inequality given in \eqref{eqn:gutman_pav_conj}. Using $\GTS_n$ the following stronger 
inequality involving the Laplacian immanantal polynomial of a tree  than the one mentioned above,   
appeared in Nagar and Sivasubramanian \cite[Theorem 1]{mukesh-siva-immanantal_polynomial}. 

\begin{theorem}[Nagar and Sivasubramanian]
	\label{thm:coeff_trees_gts} 
	Let $T$ be a tree on $n$ vertices.
	Then for all $\lambda \vdash n$, going up on $\GTS_n$ decreases each coefficient in absolute value, that is 
	$b_{\lambda,r}(L_T)$ of  the Laplacian immanantal polynomial of $T$ indexed by $\lambda$. 
\end{theorem}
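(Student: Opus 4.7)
My plan is to reduce the theorem to a single covering step of the $\GTS_n$ poset, derive an explicit matching-based combinatorial formula for $b_{\lambda,r}(L_T)$, and then compare the formulas for $T$ and its $\GTS$-cover $T'$. Since $L_T$ is positive semidefinite and $T$ is bipartite, Schur's inequality forces $\od_\lambda(L_T[S])\ge 0$ for every principal submatrix of $L_T$, so $b_{\lambda,r}(L_T)\ge 0$ and the absolute values can be dropped. It therefore suffices to show $b_{\lambda,r}(L_{T'})\le b_{\lambda,r}(L_T)$ whenever $T'$ covers $T$ in $\GTS_n$.

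For the formula, I would expand $\im_\lambda(xI-L_T)$ as a sum over $\SSS_n$. Because $T$ is a tree, any nontrivial cycle of a surviving permutation must be a $2$-cycle along an edge of $T$, since a cycle of length $\ge 3$ would force a graph-cycle in $T$. Collecting the contributing permutations by the matching $M$ formed from their $2$-cycles yields
\[
b_{\lambda,r}(L_T) = \sum_{k=0}^{\lfloor r/2 \rfloor} \chi_\lambda^{(k)} \sum_{\substack{M \text{ matching of } T \\ |M| = k}} \sum_{\substack{R \subseteq V(T) \setminus V(M) \\ |R| = r - 2k}} \prod_{v \in R} \deg_T(v),
\]
where $\chi_\lambda^{(k)}$ denotes the value of $\chi_\lambda$ on cycle type $2^k 1^{n-2k}$. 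The task reduces, for each fixed $k$, to comparing the matching-plus-degree-product sum of $T$ with that of $T'$.

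Because a single $\GTS$ step modifies $T$ only inside a bounded neighbourhood, a natural injection from the $(M,R)$ data on $T'$ into that on $T$ should exist: it is the identity outside the affected region, with a finite case analysis handling the relocated pendant branches. In the characteristic polynomial case $\lambda=1^n$, the resulting signed sum telescopes via the classical forest expansion of $b_{1^n,r}(L_T)$, recovering Csikvári's monotonicity. For a general $\lambda$ a term-by-term comparison is insufficient, since $\chi_\lambda^{(k)}$ has mixed sign.

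To handle these mixed signs, which I expect to be the main obstacle, I would seek a refinement of the expansion into a sum of non-negative quantities indexed by labelled sub-forests or vertex orientations of $T$, as foreshadowed in the abstract. Once such a positive expansion is in hand, the $\GTS$-injection can be lifted into the labelled setting, making the monotonicity sign-free and reducing it to a local check on the neighbourhood the $\GTS$ surgery actually affects. Constructing this positive expansion, and verifying its compatibility with the $\GTS$ surgery, is the technical heart of the argument.
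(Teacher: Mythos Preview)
Your plan is sound and converges on exactly the strategy the paper (following Nagar--Sivasubramanian) uses; you have correctly isolated the obstacle and the shape of its resolution, but you stop just short of naming the two ingredients that make it work.

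The positive expansion you are looking for is the Chan--Lam identity. Reorganising your matching formula by the number $j$ of bidirected edges in a full $B$-orientation (rather than by the size $k$ of the matching coming from the permutation) gives
\[
b_{\lambda,r}(L_T)\;=\;\sum_{j\ge 0} a_{T,r}(2^{j}1^{n-2j})\,\alpha_{\lambda,\,2^{j}1^{n-2j}},
\qquad
\alpha_{\lambda,\,2^{j}1^{n-2j}}=\sum_{k=0}^{j}\binom{j}{k}\,\chi_\lambda^{(k)},
\]
where $a_{T,r}(2^{j}1^{n-2j})$ counts, over all $r$-subsets $B$, the $B$-orientations of $T$ with exactly $j$ bidirected edges. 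The nontrivial input is that $\alpha_{\lambda,\mu}\ge 0$ for all $\lambda,\mu\vdash n$; this is a lemma of Chan and Lam, and it is precisely the sign-free refinement you anticipated. With it in hand the comparison reduces to $a_{T',r}(\mu)\le a_{T,r}(\mu)$ for each fixed type $\mu$, which is proved by an explicit injection $\sO_{T',r}(\mu)\hookrightarrow\sO_{T,r}(\mu)$: the identity off the path $P_{1,k}$ witnessing the $\GTS$ step, and a reflection of the oriented vertices through the midpoint of $P_{1,k}$ (with one extra rule when vertex $1$ is oriented into the relocated branch). So your ``local injection after a positive expansion'' picture is exactly right; the objects being injected are vertex orientations of a prescribed type, not the $(M,R)$ pairs weighted by the signed $\chi_\lambda^{(k)}$.

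One small correction: your Schur argument for $b_{\lambda,r}(L_T)\ge 0$ reaches the right conclusion but skips a step. One has $b_{\lambda,r}(L_T)=\sum_{|B|=r}\im_\lambda\bigl(L_T[B\,|\,B]\oplus I_{n-r}\bigr)$, not a sum of $\od_\lambda(L_T[B])$ over $r\times r$ blocks; nonnegativity then follows because each summand is $\im_\lambda$ of an $n\times n$ positive semidefinite matrix, to which Schur's inequality applies directly.
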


Csikv{\'a}ri's $\GTS_n$ poset and the above results motivated us to extend 
 the notion of generalized tree shift operation  on trees   to  arbitrary graphs. 
 We will call this new operation as the {\it generalized graph shift} 
 (abbreviated as $\GGS$ henceforth), see Definition \ref{def:egts}. 
 The $\GGS$ operation gives us a poset on the set of unlabelled unicylic 
 graphs with certain conditions. 
 The $\GGS$ poset and the 
 following theorem which is our main result, 
 are used to solve an extreme value problem of finding 
 the max-min pair in set of unlabelled unicyclic graphs (see Theorem  \ref{thm:gen_poly_main_thm}  and Corollary  \ref{cor:min_max_pairs}).

\begin{theorem}
	\label{thm:main_thm} 
	Let $G$ be a bipartite graph on $n$ vertices and 
	let $L_G$ be its Laplacian matrix. 
	Then for all $\lambda \vdash n$ and for all $r\geq 0$,  each coefficient 
	$b_{\lambda,r}(L_{G})$ given in \eqref{eqn:def_lapl_imm_poly} 
	is a non-negative integer and the $\GGS$ operation decreases each  $b_{\lambda,r}(L_{G})$.   
\end{theorem}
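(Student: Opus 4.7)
The theorem has two parts: (i) each $b_{\lambda, r}(L_G)$ is a non-negative integer, and (ii) the $\GGS$ operation weakly decreases it. The strategy is to build a combinatorial model for $b_{\lambda, r}(L_G)$ in terms of vertex orientations of $G$ (as the abstract promises), and then exhibit a weight-preserving injection from orientations of $\GGS(G)$ into those of $G$, paralleling the tree case of Theorem \ref{thm:coeff_trees_gts}.

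For (i), expanding $\im_\lambda(xI - L_G)$ by the fixed-point structure of the indexing permutation yields
\[
b_{\lambda, r}(L_G) \;=\; \frac{1}{\chi_\lambda(\id)} \sum_{\substack{S \subseteq [n] \\ |S| = r}} \; \sum_{\substack{\sigma \in \SSS_n \\ \sigma \text{ fixes } [n] \setminus S}} \chi_\lambda(\sigma) \prod_{i \in S} (L_G)_{i, \sigma(i)}.
\]
By the restriction rule $\chi_\lambda \big|_{\SSS_r \times \SSS_{n-r}} = \sum_{\mu \vdash r,\, \nu \vdash n - r} c^\lambda_{\mu, \nu}\, \chi_\mu \otimes \chi_\nu$, with non-negative integer Littlewood--Richardson coefficients $c^\lambda_{\mu, \nu}$, the inner sum rewrites as a non-negative integer combination of ordinary immanants $\im_\mu(L_G[S])$. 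Since each principal submatrix $L_G[S]$ of the positive semi-definite matrix $L_G$ is itself positive semi-definite, Schur's inequality gives $\im_\mu(L_G[S]) \geq 0$, yielding non-negativity of $b_{\lambda, r}(L_G)$. Integrality then follows from the vertex-orientation model, which presents $b_{\lambda, r}(L_G)$ as an honest count of weighted bipartite-orientation objects with integer multiplicities; here the bipartite hypothesis is critical, since it ensures every cycle through the pivot vertex has even length and the $\pm 1$ signs from $L_G = \tilde B \tilde B^T$ cancel globally.

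For (ii), the plan is to adapt the injection used by Nagar and Sivasubramanian in Theorem \ref{thm:coeff_trees_gts}. Given a $\GGS$ move $G \to G'$ at a pivot vertex $v$, a local ``swap'' at $v$ induces an injection from the vertex orientations enumerated by $b_{\lambda, r}(L_{G'})$ into those enumerated by $b_{\lambda, r}(L_G)$. The injection preserves character weights, and the orientations of $G$ outside its image contribute non-negatively to the difference $b_{\lambda, r}(L_G) - b_{\lambda, r}(L_{G'})$, by the same Littlewood--Richardson positivity argument used in (i).

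The main obstacle is extending the local-swap construction from trees (where the pivot $v$ neatly separates two subtrees, so the swap is an unobstructed exchange) to bipartite graphs that may contain cycles passing through $v$. The vertex-orientation model must be refined to handle cycle edges consistently in both $G$ and $G'$, and the character-weighted contributions along cycles must be tracked through the swap to ensure the injection is both well-defined and weight-monotone simultaneously for every $\lambda \vdash n$. Once this local analysis around $v$ is set up — using the bipartite structure to keep the signs in the factorization $L_G = \tilde B \tilde B^T$ under control — the global monotonicity reduces to a direct verification that the swap, restricted to orientations involving only edges affected by the $\GGS$ move, preserves the underlying combinatorial weights.
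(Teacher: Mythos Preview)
Your approach to part (i) via Littlewood--Richardson restriction and Schur's inequality is a genuinely different route from the paper's, and after dropping the spurious $1/\chi_\lambda(\id)$ factor (recall $b_{\lambda,r}$ is defined through $\im_\lambda$, not $\od_\lambda$) it yields both non-negativity and integrality directly, since $c^\lambda_{\mu\nu}$, $\chi_\nu(\id)$, and $\im_\mu(L_G[S])$ are all non-negative integers. In fact your argument never uses bipartiteness, so it proves $b_{\lambda,r}(L_G)\ge 0$ for \emph{any} graph. The paper instead establishes the explicit formula $b_{\lambda,r}(L_G)=\sum_{\mu} a_{G,r}(\mu)\,\alpha_{\lambda,\mu}$ (Lemma~\ref{lem:coeff_non-negative}), where $a_{G,r}(\mu)$ counts $B$-orientations of type $\mu$ over all $|B|=r$ and $\alpha_{\lambda,\mu}\in\ZZ_{\ge 0}$ by Chan--Lam (Lemma~\ref{lem:chan_lam_char_sum_positive}). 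This is less economical for (i) alone, but it is precisely what makes (ii) clean.

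For part (ii) your picture of $\GGS$ is off, and this matters. The operation is not a local swap at a single pivot vertex: by Definition~\ref{def:egts} it involves a path $P_{1,k}$ whose interior vertices have degree $2$, together with the hypothesis that $1$ and $k$ do \emph{not} lie on a common cycle. Hence the obstacle you flag --- cycles through the pivot --- is excluded by definition; the graph decomposes as $X$--$P_{1,k}$--$Y$ with all cycles confined to the black boxes $X$ and $Y$. The paper's injection (Lemma~\ref{lem:lem_injection1}) is therefore essentially the tree-case map of Theorem~\ref{thm:mukesh_injection}: a $B$-orientation of $G_2$ is either copied verbatim to $G_1$, or, when vertex $1$ is oriented into $Y$, one reflects along $P_{1,k}$ (vertex $i\mapsto k+1-i$) and replaces $B$ by $B'=(B\cap X)\cup(B\cap Y)\cup B_P^t$. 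The point your sketch does not isolate is that this injection is \emph{type-preserving}: it gives $a_{G_2,r}(\mu)\le a_{G_1,r}(\mu)$ for every $\mu$, so monotonicity of $b_{\lambda,r}$ follows for all $\lambda$ simultaneously from the formula and $\alpha_{\lambda,\mu}\ge 0$. Tracking character weights through the injection and invoking LR positivity on a leftover, as you propose, is unnecessary once the injection respects type, and would in any case require you to first set up the orientation model you deferred in part~(i).
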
 

Let $G$ be a bipartite graph on $n$ vertices with the Laplacian matrix $L_G$.
Using Theorem \ref{thm:main_thm}, we obtain several corollaries 
involving the constant term of the immanantal polynomials of $L_G$. 
%

The layout of this paper is the following: 
The next section introduces the concepts of Kelmans transformation, 
the generalized tree shift poset and it's generalized version, the $\GGS$ operation on arbitrary graphs. In Section~\ref{sec:imm_poly}, a very basic facts and an elementary 
relationship between  $\im_{\lambda}(L_G)$ and 
the enumerations of vertex orientations in a bipartite graph $G$ are given.  Lieb's conjecture for the Laplacian immanants of a $G$ is also proved. These results are generalized to each coefficient of the Laplacian immanantal polynomial of a bipartite graph in Section \ref{sec:coeff_imm_poly}.
In Section~\ref{sec:proof_main_thm}, we prove Theorem \ref{thm:main_thm} 
which can be thought as a result involving  coefficients of the 
generalized matrix polynomial indexed by the Schur symmetric function, since each irreducible character of $\SSS_n$ is the inverse image of the Frobenius characteristic map of the Schur symmetric function (see Sagan \cite{sagan-book} for more details). 
Further, Theorem \ref{thm:main_thm}  is extended to the generalized matrix polynomial of $L_G$ associated with the elementary, power sum and 
complete homogeneous symmetric functions in Section~\ref{sec:gen_fun_symm_fun}.  
In the last section,  using $\GGS$ operation, 
the monotonicity property of the  spectral radius and 
Wiener index of a graph are discussed.

\section{Graph operations}
\label{sec:def_posets}

 Throughout this paper, our graphs  are simple and connected with vertex set $[n]$. 
The contents of this section, may be conveniently presented into two parts  separately introduce the notion of Kelmans transformation, $\GTS_n$ poset and 
it's generalized version for arbitrary graphs. 
These graph operations are main tools to discuss monotonicity results involving some algebraic and  topological parameters of a graph. 
Using $\GTS$ operation and generalized graph shift,  some of them are determined and are mentioned in Table \ref{tab:max-min}, Theorem \ref{thm:gen_poly_main_thm}  and Corollary \ref{cor:min_max_pairs}.  

\subsection{Kelmans transformation and $\GTS_n$ poset}
\label{subsec:poset_gts_n} 

Towards defining the Kelmans transformation, we need the 
following terminology and notations. 
 For a given vertex $v$ in a graph $G$, define $N[v]$ to be  the set of neighbours of $v$ containing $v$. Define  $N(v):=N[v]\setminus \{v\}$, that is, $N[v]$ is a 
 disjoint union of $v$ and $N(v)$.  We begin with the following definition of Kelmans transformation. 
\begin{definition}
	\label{def:kelman} 
	Let $G_1$ be a graph with $n$ vertices and  
	let $x$ and $y$ be two arbitrary vertices of  $G_1$. 
	We construct a graph $G_2$ by erasing all edges between $x$ and 
	$N(x)\setminus N[y]$ and add edges between $y$ and 
	$N(x) \setminus N[y]$.  This operation is called  Kelmans transformation. 
	We note that the  number of edges in the obtained graph $G_2$ equals the number of edges in $G_1$. 
\end{definition}

Kelmans transformation can be applied to any graph, 
but if we consider it as a transformation on trees to get a connected graph we have to make a restriction on vertices $x$ and $y$ in $G_1$. 
Namely they should have distance at most $2$ in order to obtain a connected graph $G_2$ as a result.  To handle this problem Csikv{\'a}ri \cite{csikvari-poset1} put some restrictions on $x$ and $y$ to define the generalized tree shift operation on trees. 
From \cite{csikvari-poset1}, we recall his definition of $\GTS_n$ poset on the set of unlabelled trees with $n$ vertices.
\begin{definition} 
	\label{def:gts}
	Let $T_1$ be a tree with $n$ vertices. Assume that $1$ and 
	$k$ are two vertices of $T_1$ such that the interior vertices 
	(if they exist) on the unique path $P_{1,k}$ between $1$ and $k$, have degree 2. 
	Let $k-1$ be the neighbour of $k$ on $P_{1,k}$. 
	Construct a new tree $T_2$ by moving all neighbours of $k$ except $k-1$ 
	to the vertex $1$. This operation is called the generalized tree shift. 
	Here, we say  $T_2$ is obtained from $T_1$ using $\GTS$ operation. 
	This is illustrated in Figure \ref{fig:gts_example}. 
	The generalized tree shift operation gives us a partial order  
	denoted as  ``$\leq_{\GTS_n}$" on the set of unlabelled trees on $n$ vertices. 
\end{definition}


 \begin{figure}[ht]
  	\centerline{\includegraphics[scale=0.65]{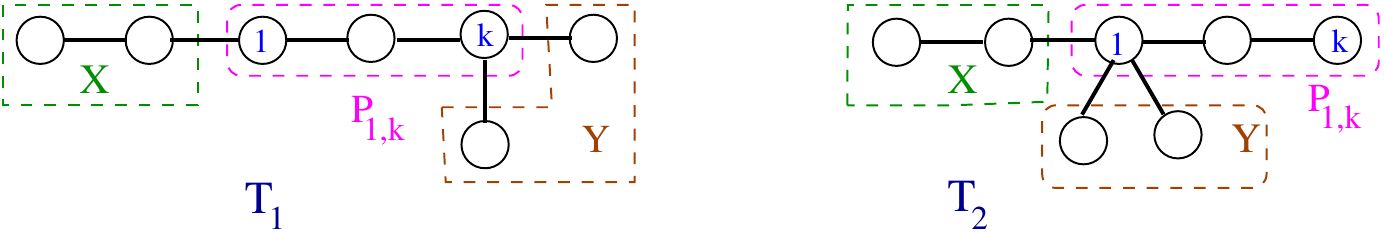}}
  	\caption{Two trees with $T_1 \leq_{\GTS_n} T_2$.}
  	\label{fig:gts_example}
  \end{figure}

If $T_1\leq_{\GTS_n} T_2$, we say tree $T_1$ is below   $T_2$ 
or $T_2$ is above $T_1$. When $n=6$, we refer the reader to Csikv{\'a}ri 
\cite{csikvari-poset1} for the Hasse diagram of $\GTS_6$. 
Among other results he proved the following important result which gives the max-min pair for a monotonicity result on $\GTS_n$ poset.
\begin{lemma}[Csikv{\'a}ri]
	\label{lem:Csikvari_min-max}
Among trees, the star graph $S_n$   and the path graph $P_n$  on $n$ vertices   
are the only maximal and the minimal elements of $\GTS_n$ respectively.
\end{lemma}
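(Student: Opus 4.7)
The plan is to establish both halves uniformly by a single monotone invariant. Set $\sigma(T) = \sum_{v \in V(T)} \deg(v)^2$. My first step is the one-line computation: if $T_2$ is obtained from $T_1$ by a $\GTS$ step with $\deg_{T_1}(1) = d_1$ and $\deg_{T_1}(k) = d$, then only the degrees of $1$ and $k$ change (to $d_1 + d - 1$ and $1$ respectively), and a direct expansion gives $\sigma(T_2) - \sigma(T_1) = 2(d-1)(d_1 - 1)$. Hence $\sigma$ is weakly monotone along $\GTS$ and strictly increasing whenever $d, d_1 \geq 2$; when the increment is zero the step is either vacuous ($d = 1$) or $1$ is a leaf of $T_1$ that merely swaps roles with $k$, and one checks directly that $T_2 \cong T_1$.

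For the maximality half I would first verify that $S_n$ itself admits no $\GTS$ step producing a non-isomorphic tree: the only legal pairs $(1, k)$ either have $k$ a leaf with nothing to move, or $1, k$ both leaves while the central vertex appears interior with degree $n-1 > 2$ for $n \geq 4$, which is forbidden by the interior-degree-$2$ condition. Conversely, if $T \neq S_n$ then $T$ contains an edge $uv$ with both endpoints non-leaves (otherwise every edge is incident to a leaf, forcing $T$ to be a star). Taking $(1, k) = (u, v)$ gives a legal $\GTS$ pair with vacuous interior and $d, d_1 \geq 2$, so $\sigma$ strictly increases and $T$ lies strictly below the image tree.

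For the minimality half, I would first note that $P_n$ is minimal: any inverse $\GTS$ on $P_n$ must choose $k$ to be an endpoint and $1$ to be some internal $v_i$, whose unique non-path neighbor is $v_{i+1}$; moving $v_{i+1}$ back to the endpoint yields nothing more than a relabeling of $P_n$. Conversely, if $T \neq P_n$ then $T$ contains a vertex of degree $\geq 3$. Pick any leaf $k$ of $T$ and walk $k, k-1, k-2, \ldots$ through vertices of degree $2$ until hitting the first vertex $w$ of degree $\geq 3$; such a $w$ exists, for otherwise every internal vertex of $T$ has degree $2$ and $T = P_n$. Define $T_1$ by detaching one non-path neighbor $u$ of $w$ and reattaching it to $k$. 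A short check shows $\GTS(T_1) = T$ with pivots $(1, k) = (w, k)$: the interior of the path retains degree $2$, and $k$ in $T_1$ has exactly the one non-path neighbor $u$ that gets moved back to $w$. The invariant computation, now applied with $d = 2$ and $d_1 = \deg_{T_1}(w) = \deg_T(w) - 1 \geq 2$, gives $\sigma(T_1) < \sigma(T)$, so $T_1 \not\cong T$ and $T$ is not minimal.

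The step I expect to cost the most care is the one highlighted above: ruling out the scenario in which a $\GTS$ move produces an isomorphic tree without being literally vacuous (the leaf-pivot case). Phrasing everything through $\sigma$ sidesteps a direct isomorphism analysis and lets both halves come out by the same mechanism; without the invariant, one would be forced into a case analysis on tree shapes that blurs the symmetry between the two extremal statements.
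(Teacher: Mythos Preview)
The paper does not supply its own proof of this lemma; it is quoted from Csikv{\'a}ri and stated without argument. The only hint the paper offers toward a proof is the remark (just after Definition~\ref{def:egts}) that a $\GTS$/$\GGS$ move weakly increases the number of leaves, strictly so exactly when neither endpoint is already a leaf---and this leaf count is indeed the invariant Csikv{\'a}ri uses in his original argument.

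Your proof is correct but substitutes a different monotone invariant, $\sigma(T)=\sum_v\deg(v)^2$, for the leaf count. The two carry the same decisive information here: both increase strictly under $\GTS$ precisely when $d,d_1\ge 2$, and both require the same pair of structural observations (a tree in which every edge meets a leaf is a star; a walk from any leaf in a non-path tree reaches a vertex of degree $\ge 3$). The leaf-count route is marginally more economical, since the extremal values $n-1$ and $2$ pin down $S_n$ and $P_n$ directly and make the ``zero-increment $\Rightarrow$ isomorphic'' step immediate (a tree with $n-1$ leaves is a star, a tree with $2$ leaves is a path). Your increment formula $\sigma(T_2)-\sigma(T_1)=2(d-1)(d_1-1)$, on the other hand, makes the strictness criterion completely explicit in one line, at the cost of having to verify separately that a leaf-pivot move returns an isomorphic tree. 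Either approach closes the argument.
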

  
Thus, for all monotonicity results on $\GTS_n$,  the  max-min pair among all 
unlabelled trees on $n$ vertices is either $(P_n,S_n)$ or $(S_n,P_n)$. 
   For some  well known monotonicity results,  the max-min pair in the set 
   of trees with $n$ vertices,   
are given in Table \ref{tab:max-min}.  When we go up along $\GTS_n$, 
upward ($\uparrow$) and downward ($\downarrow$) arrows  
show graph-theoretical parameters which are increasing and decreasing respectively.  
For instance, in \cite{csikvari-poset2} authors showed that the largest 
eigenvalues of both the matrices, adjacency $A(T)$ and 
the Laplacian $L_T$ increase and hence the max-min pairs for these 
spectral properties are $(S_n, P_n)$. Later in \cite{mukesh-eigenvalue} 
authors generalized these results to the $q$-Laplacian and $q,t$-Laplacian  
matrix for all $q,t\in \RR_{\geq 0}$, the set of non-negative reals. They also proved results involving 
exponential distance matrix and determined the max-min pair for 
the largest and the smallest eigenvalues among the set of unlabelled trees with $n$ vertices.
 
  	\begin{table}[h!]
  		 \begin{center}
  	\begin{tabular}{|c | c | c|}
  		\hline
  	Graph-theoretical parameter & Monotonicity & Max-Min pair  \\ 
  		\hline
  	Number of closed walks of  a fixed length $\ell$ \cite{csikvari-poset1} & $\uparrow$ & $(S_n,P_n)$\\
  	\hline
  		Estrada index \cite{csikvari-poset1}  & $\uparrow$ & $(S_n,P_n)$\\
  	\hline
  		Wiener index  \cite{csikvari-poset1} & $\downarrow$ & $(P_n,S_n)$\\
  	\hline
  		Algebraic connectivity \cite{csikvari-poset2} & $\uparrow$ & $(S_n,P_n)$\\
  	\hline
  		The largest eigenvalue of adjacency and the Laplacian \cite{csikvari-poset2} & $\uparrow$ & $(S_n,P_n)$\\
  	\hline
  		Coefficients of matching polynomial \cite{csikvari-poset2} & $\downarrow$ & $(P_n,S_n)$\\
  	\hline
  		Coefficients of the Laplacian immanantal polynomial \cite{mukesh-siva-immanantal_polynomial} & $\downarrow$ & $(P_n,S_n)$\\
  	\hline
  \end{tabular}
  \end{center}
\caption{Monotonicity of graph theoretical parameters when we go up along  $\GTS_n$.}
	\label{tab:max-min}
  \end{table}

\subsection{Generalized graph shift}
\label{subsec:poset_gts_n_c} 
Inspired by Kelmans transformation and $\GTS_n$ poset, 
an identical definition of the generalized graph shift  operation on a graph $G$ is given in this subsection. 
To define $\GGS$ operation on $G$, some restrictions are used on the chosen vertices $x$ and $y$ of $G$ in 
Kelmans transformation defined above but the $\GTS$ operation on a tree is generalized for an arbitrary graph.  

\begin{definition}
	\label{def:egts}
Let $G_1$ be an arbitrary  graph with $n$ vertices.  
Let  $1$ and $ k $ be two vertices in $G_1$ connected  via a path say $P_{1,k}$ from $1$ to $k$ such that 
\begin{enumerate}
	\item  each 
	interior vertices (if they exist) on $P_{1,k}$  have degree 2 and   
	\item both the vertices $1$ and $k$ are not contained in one cycle of $G_1$.
\end{enumerate}
Thus $k$ has a unique neighbor on $P_{1,k}$, let it be $k-1$. 
Construct a graph $G_2$ by moving all neighbours of $k$ 
except $k-1$ to the vertex $1$. 
 Thus, all neighbors of  $k$ which do not lie on the path 
 $P_{1,k}$ have become neighbors of  $1$ in  $G_2$. 
 This operation 
 is called the  
 generalized graph shift (henceforth $\GGS$), denoted  $G_2=\GGS(G_1)$. 
 Note that the $\GTS$ operation is a special case of $\GGS$ 
 construction when $G_1$ is a tree.
 For the sake of clarity $\GGS$ operation is illustrated in  Figure~\ref{fig:injection_gts_n^c_example}.
\end{definition} 

 \begin{figure}[ht]
	\centerline{\includegraphics[scale=0.65]{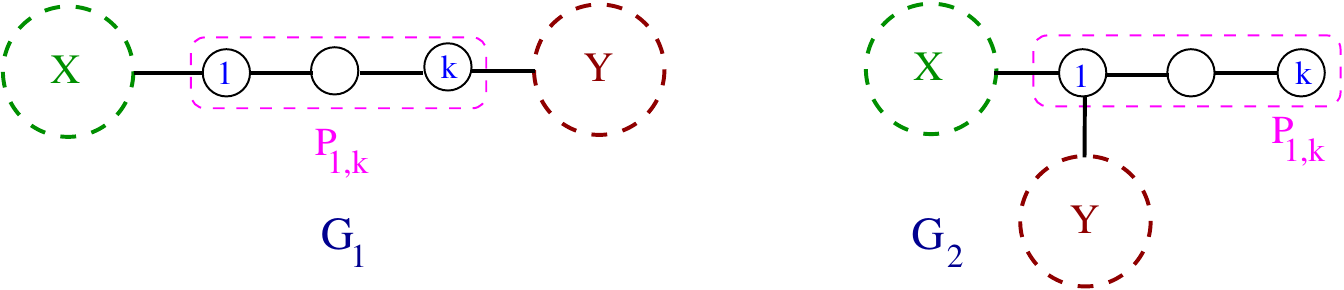}}
	\caption{Two graphs $G_1$ and $G_2$ such that $G_2=\GGS(G_1)$.}
	\label{fig:injection_gts_n^c_example}
\end{figure}

In the above definition, vertices $1$ and $k$ are 
called the recipient and the donor, respectively. 
Here we note that if the role of the recipient and donor  
in $G_1$ are exchanged then the obtained   graph in $\GGS$ operation is  isomorphic to $G_2$. 
It is easy to check that $\GGS$ increases the number of leaf vertices. 
In fact the number of leaves in $G_2$ is one more than the number of leaves in $G_1$ if and only if both $1$ and $k$ are not leaves  in $G_1$.


\subsection{Applications of $\GGS$ operation}
For two positive integers $n$ and $k$ with $n>k\geq 3$, let $\Omega_{C_{k}}^v(n)$ be the set of all unlabelled unicyclic  
graphs with $n$ vertices where the length of the unique  cycle $C_{k}$ is $k$ and  
the degree of each vertex in $C_k$ is $2$ except one  vertex $v\in C_{k}$. We define an order relation, denoted  ``$\leq_{\GGS_n}$'' on $\Omega_{C_{k}}^v(n)$ as follows: If $G_1,G_2\in \Omega_{C_{k}}^v(n)$ and $G_2$ is obtained from $G_1$ using some number of $\GGS$ operations on $G_1$ then  $G_1 \leq_{\GGS_n} G_2$. 
It is easy to check that the relation $\leq_{\GGS_n}$  is  
a partial order  on the set $\Omega_{C_{k}}^v(n)$. 
For the sake of clarity, when $n=8$ and $k=4$  
the Hasse diagram of $\GGS_8$ poset  on $\Omega_{C_{4}}^v(8)$ is given in Figure \ref{fig:hasse_diag_gts_8^4}.  

 \begin{figure}[ht]
	\centerline{\includegraphics[scale=0.9]{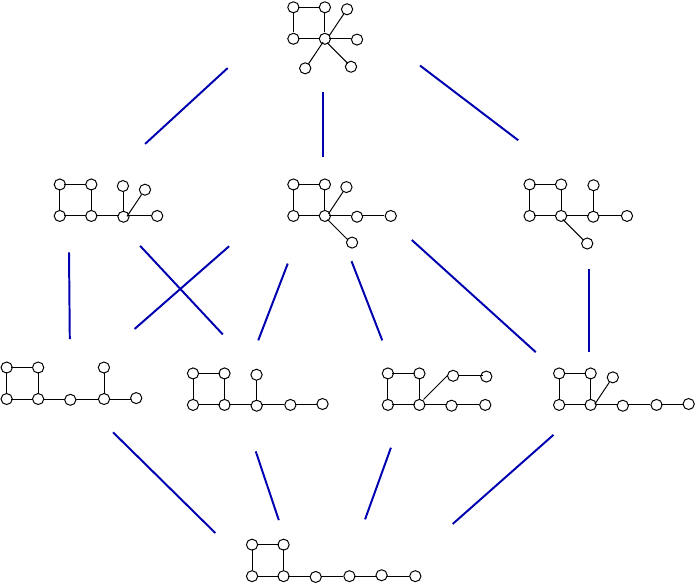}}
	\caption{The Hasse diagram of $\GGS$ poset on the set $\Omega_{C_{4}}^v(8)$.}
	\label{fig:hasse_diag_gts_8^4}
\end{figure}

For a fixed positive integer $k\geq 3$, let $C_k$ be the cycle  on $k$ vertices. 
Let $S_{n-k+1}$ be the star graph on $n-k+1$ vertices such that vertex  
$v_1\in S_{n-k+1}$ has degree $n-k$. Construct a new graph $G_{S_{n-k+1}}$ by moving all neighbours of $v_1$ to a vertex (say $v$) of $ C_k$ and deleting $v_1$. 
This operation of joining $S_{n-k+1}$ to $C_k$ is denoted as $S_{n-k+1}|v_1:C_k|v$. 
Thus $G_{S_{n-k+1}}=S_{n-k+1}|v_1:C_k|v$ is an unicyclic graph with $n$ vertices.
Let  $P_{n-k+1}$ be the path graph on $n-k+1$ vertices such that 
$v_2\in P_{n-k+1}$ is a leaf vertex. Analogously, define $G_{P_{n-k+1}}:=P_{n-k+1}|v_2:C_k|v$ as an unicyclic graph with $n$ 
vertices having cycle $C_k$ of length $k$ and each vertex in $C_k$ 
has degree $2$ except the vertex $v$ which has degree $3$ in $G_{P_{n-k+1}}$. 
It is easy to check that  $G_{S_{n-k+1}},G_{P_{n-k+1}}\in \Omega_{C_{k}}^v(n)$.  
Since the proof of the following lemma is identical to the proof of Lemma 
\ref{lem:Csikvari_min-max},  we omit it and  merely state the result.

\begin{lemma}
\label{lem:gts_n^c_min-max} 
Let $G_{S_{n-k+1}}$ and $G_{P_{n-k+1}}$ be the unicyclic graphs defined in the above paragraph. Then 
 $G_{S_{n-k+1}}$ and $G_{P_{n-k+1}}$ are the only maximal and the minimal elements of $\GGS_n$ poset on $\Omega_{C_{k}}^v(n)$ respectively.

%
\end{lemma}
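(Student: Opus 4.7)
The plan is to mimic Csikvári's proof of Lemma~\ref{lem:Csikvari_min-max} by observing that for any $G \in \Omega_{C_k}^v(n)$ the cycle $C_k$ behaves as an inert blob attached at the distinguished vertex $v$. Concretely, each $G \in \Omega_{C_k}^v(n)$ is determined by the rooted tree $T_G$ on $m := n-k+1$ vertices obtained by deleting the $k-1$ non-$v$ cycle vertices, with $v$ declared as the root. Condition~(2) of Definition~\ref{def:egts} forbids both endpoints of a shifting path from lying on the unique cycle, and whenever $v$ is one of them, the recipient/donor-symmetry remark following Definition~\ref{def:egts} lets us always take $v$ as the recipient, so that the cycle edges are never touched. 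Under this correspondence, $\GGS$ on $\Omega_{C_k}^v(n)$ becomes root-preserving $\GTS$ on $T_G$, and the lemma reduces to the rooted analogue of Lemma~\ref{lem:Csikvari_min-max}: among rooted trees on $m$ vertices, the star rooted at its centre is the unique $\GTS$-maximum and the path rooted at a leaf is the unique $\GTS$-minimum.

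For the maximum direction, introduce the potential $\Phi(G) := \sum_{x \in V(T_G)} d_{T_G}(x,v)$. A $\GGS$ step oriented so that the recipient is the endpoint closer to $v$ strictly decreases $\Phi$, since every relocated neighbour of the donor $k$ has its distance to $v$ drop from $d_{T_G}(k,v)+1$ to $d_{T_G}(1,v)+1$. If $G \neq G_{S_{n-k+1}}$, then $T_G$ is not the star rooted at its centre, so some vertex sits at distance exactly $2$ from $v$; its parent $w'$ is adjacent to $v$, and $\GGS$ with recipient $v$ and donor $w'$ (the path $v$--$w'$ has empty interior, so Condition~(1) is vacuous) produces a graph strictly above $G$ in $\GGS_n$. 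Hence $G$ is not maximal, and $G_{S_{n-k+1}}$ is the unique maximum.

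For the minimum direction, suppose $G \neq G_{P_{n-k+1}}$, so $T_G$ is not a path with $v$ as endpoint. Then either $v$ has tree-degree at least $2$ in $T_G$ or some non-$v$ vertex of $T_G$ has degree at least $3$. Choose a deepest vertex $w$ of $T_G$ with at least two children (possibly $w = v$); by the depth choice every such child is the root of a path-branch, for otherwise some child would itself be a deeper vertex of degree $\geq 3$. Pick two of these path-branches $B_1$ and $B_2$, and construct $G_0 \in \Omega_{C_k}^v(n)$ by detaching $B_2$ from $w$ and re-attaching its root to the far leaf endpoint of $B_1$. Applying $\GGS$ to $G_0$ with recipient $w$ and donor that endpoint (the shifting path now runs along $B_1$, whose interior vertices all have degree $2$ in $G_0$) recovers $G$; hence $G_0 \leq_{\GGS_n} G$ with $G_0 \neq G$, so $G$ is not minimal.

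The main obstacle is the bookkeeping that keeps every constructed graph inside $\Omega_{C_k}^v(n)$, namely that it remains unicyclic with the cycle $C_k$ intact and with $v$ the only cycle vertex of tree-degree $\geq 1$. Both are handled uniformly by the two conventions above: never use $v$ as a donor, and never choose a shifting path that enters the cycle. With this bookkeeping in hand, the entire argument is literally the rooted-tree version of Csikvári's original proof, which is why the author is content to omit the details.
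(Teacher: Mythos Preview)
Your proof is correct and follows essentially the same strategy the paper has in mind: the paper omits the argument entirely, stating only that it is identical to Csikv\'ari's proof of Lemma~\ref{lem:Csikvari_min-max}, and your reduction to rooted trees on $n-k+1$ vertices together with the explicit ``go up from any non-star, go down from any non-path'' constructions is precisely the rooted analogue of that argument. The one cosmetic difference is that you certify non-triviality of your moves via the depth-sum potential $\Phi$, whereas the paper's hint (the sentence after Definition~\ref{def:egts} noting that a proper $\GGS$ increases the leaf count by one) points toward using the number of leaves instead; both potentials work equally well here.
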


The above lemma is illustrated in Figure \ref{fig:hasse_diag_gts_8^4}, 
the Hasse diagram of $\GGS_8$ poset on the subset $\Omega_{C_{4}}^v(8)$ of unlabelled unicyclic graphs with $8$ vertices. 
Thus for all monotonicity results on the $\GGS_n$ poset of  $\Omega_{C_{k}}^v(n)$, the max-min pair is either $(G_{S_{n-k+1}},G_{P_{n-k+1}})$ or $(G_{P_{n-k+1}},G_{S_{n-k+1}})$.  
When we restrict $\GGS$ operation to the set of bipartite graphs for  
calculating the Laplacian immanantal polynomial,  Theorem \ref{thm:main_thm} 
determines the max-min pair in the set $\Omega_{C_{2k}}^v(n)$ for the each coefficient $b_{\lambda,r}(L_{G})$ where  $\lambda \vdash n$, $k\geq 2$ and $r=0,1,\ldots,n$.

\begin{remark} 
	\label{rem:arbitrarygraph}
	In the above $\GGS_n$ poset on the set  $\Omega_{C_{k}}^v(n)$, the cycle $C_k$ can 
	be replaced by any connected graph to construct different poset on some subset of connected graphs. 
	For instance, when $C_4$ is replaced by $P_2$ in Figure \ref{fig:hasse_diag_gts_8^4}, 
	we get  the Hasse diagram of $\GTS_6$ given in Csikv{\'a}ri  \cite{csikvari-poset1}. 
\end{remark}



\section{Vertex Orientations and Immanants of Bipartite Graphs}
\label{sec:imm_poly}
It is worth pointing out the  notion of vertex orientation that 
has appeared in several context and having connections 
with the number of matchings, moments of vertices, coefficients of the Laplacian immanantal polynomial  and 
Wiener index of a tree (for more details see  \cite{chan_lam_bipartite,hook_immanant_explained-chan_lam,mukesh-siva-hook,mukesh-siva-immanantal_polynomial}). 
For convenience of the reader, we repeat some relevant material 
on vertex orientation from \cite{chan_lam_bipartite,mukesh-siva-hook,mukesh-siva-immanantal_polynomial} without proof, 
thus making our exposition self contained. 
In this paper, enumeration of vertex orientations will be used to express each  
coefficient of the Laplacian immanantal polynomial of a bipartite graph as a sum of  
non-negative terms. From \cite{chan_lam_bipartite,mukesh-siva-hook} we recall the following definition of vertex orientation of a graph.
\begin{definition}
	\label{def:vertex_orient} 
	Let $G$ be a graph with $n$ vertices. 
	In a vertex orientation $O$ of $G$, we assign an arrow to each vertex 
	$v \in G$ pointing away from $v$ along one of its incidence edges. 
	If $\deg(v)$ denotes the degree of  $v\in G$ then there are $\deg(v)$ choices of assigning an arrow to $v$ in $G$. Such an assignment of arrows to each vertex of 
	$G$ is said to be a vertex orientation.  Throughout this paper, $O(v)=u$ 
	denotes the arrow of $v$ assigned on the edge $(v,u)$, away from $v$ and 
	towards  $u$. It is illustrated in Figure \ref{fig:example of vertex orient}.
\end{definition}

From the above definition of vertex orientation in a graph $G$, it is easy to check that each edge in $G$ can have atmost two arrows on it. 
Let $G$ be a bipartite graph with $n$ vertices and let $O$ be a vertex orientation in $G$. 
Since $G$ is a bipartite graph, no directed cycle with odd length can   
be possible in $O$. 
 As done by Chan and Lam in \cite{chan_lam_bipartite}, if in $O$, there are $n_1$ 
 edges with one arrow on them, $n_2$ bidirected edges (with two arrows on them),  $n_4$ directed $4$-cycles, $\ldots $, $n_{2k}$ directed $2k$-cycles for $k\ge 3$ 
 then it is simple to check that $n_1+2n_2+4n_4+\cdots+2kn_{2k}+\cdots=n$ where  $n_1,n_2,n_4,\ldots,n_{2k},\ldots\in \ZZ_{\geq 0}$, the set of non-negative integers. Each orientation $O$ in a bipartite graph gives us a partition $\mu=(1^{n_1},2^{n_2},4^{n_4},\ldots)$ of $n$. Such an orientation $O$ in $G$ is said to be of type $\mu$ orientation.  Let us define $\sO_{\mu}^{G}$ to be the set of 
 vertex orientations in $G$ of type $\mu$  and let $|\sO_{\mu}^{G}|=a_{G}^{}(\mu)$.   Let $\sP_G(n)$ be the set 
 of all possible partitions of $n$ obtained from the set of orientations in $G$.   
In Figure \ref{fig:example of vertex orient}, three vertex orientations $O$, $P$ and 
$Q$ of a bipartite graph  $G=P_3|1:C_4|1 \in \Omega_{C_4}^1(6)$ are given. 
It is very easy to check that the types of  $O$, $P$ and $Q$ are  $(1^2,2^2)$, $(1^2,4^1)$ and $(2^1,4^1)$, respectively. In $O,P$ and $Q$, edges with two arrows and a 4-cycle are depicted with using brown and blue colours (can seen better on a color monitor), respectively.

\begin{figure}[h]
	\centerline{\includegraphics[scale=0.72]{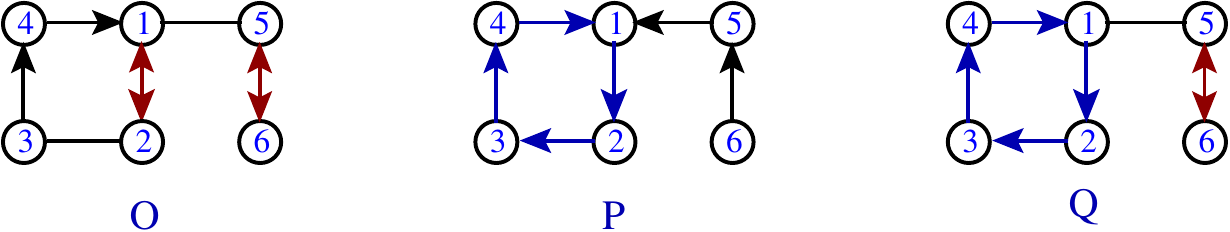}}
	\caption{Three vertex orientations of $C_4|1:P_3|1$.}
	\label{fig:example of vertex orient}
\end{figure}

 Let $P(n)$ be the set of all partitions of $n$.  For $\lambda \vdash n$, let
 $\chi_{\lambda}^{}$ denote  the corresponding irreducible character of  
 $\SSS_n$. In this paper,  $\chi_{\lambda}^{}(\nu)$ will be denoted the character value $\chi_{\lambda}^{}(\psi)$ evaluated at a permutation $\psi\in \SSS_n$ with cycle type $\nu$. 
For $\nu,\mu \in P(n)$ with $\mu=(1^{n_1},2^{n_2},3^{n_3},\ldots,n^{n_n})$ 
and  $\nu=(1^{m_1},2^{m_2},3^{m_3},\ldots,n^{m_n})$, we define 
$\binom{\mu}{\nu}=\binom{n_2}{m_2}\binom{n_3}{m_3}\cdots\binom{n_n}{m_n}$, where $\binom{n_i}{m_i}$ is the binomial coefficient and $\binom{n_i}{m_i}=0$ if $n_i<m_i$. For $\lambda,\mu \vdash n$, 
define $\alpha_{\lambda,\mu}^{}$ as  the following binomial weighted sum involving a character  $\chi_{\lambda}^{}$ of $\SSS_n$ and  binomial coefficients. 
\begin{equation}
\label{eqn:def_alpha}
\alpha_{\lambda,\mu}^{}=\sum_{\nu \in P(n)} \chi_{\lambda}^{}(\nu)\binom{\mu}{\nu}.
\end{equation}

When $\mu=(1^{n_1},2^{n_2},4^{n_4},\ldots)$ 
with $n_i=0$ for all $i>2$,  Chan and Lam   \cite{chan-lam-binom-coeffs-char} showed that   $\alpha_{\lambda,\mu}^{}$ is a  non-negative integral multiple of $2^{n_2}$. 
In \cite[Lemma 3.1]{chan_lam_bipartite}, they also proved the following more general result which will be used to combinatorialise the coefficient $b_{\lambda,r}(L_G)$ of Laplacian immanantal polynomial of the  bipartite graph $G$.

\begin{lemma}[Chan and Lam]
	\label{lem:chan_lam_char_sum_positive}
	For all $\lambda,\mu \in P(n)$, the term $\alpha_{\lambda,\mu}^{}$ defined in \eqref{eqn:def_alpha}, is a  
	non-negative integer. 
\end{lemma}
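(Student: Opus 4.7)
The plan is to convert the sum defining $\alpha_{\lambda,\mu}$ into the coefficient of a Schur function in a concrete symmetric function, then factor that symmetric function into Schur-positive pieces. Integrality is immediate since each $\chi_\lambda(\nu)$ is an integer and each $\binom{\mu}{\nu}$ is a non-negative integer, so $\alpha_{\lambda,\mu} \in \ZZ$. For non-negativity, I first invoke the classical Frobenius formula $p_\nu = \sum_{\lambda \vdash n} \chi_\lambda(\nu)\, s_\lambda$ to write
\[
F_\mu \;:=\; \sum_{\nu \in P(n)} \binom{\mu}{\nu}\, p_\nu \;=\; \sum_{\lambda \vdash n} \alpha_{\lambda,\mu}\, s_\lambda.
\]
Thus proving $\alpha_{\lambda,\mu} \ge 0$ reduces to showing that $F_\mu$ is Schur-positive.

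Next, I would factor $F_\mu$ explicitly. Writing $\mu = (1^{n_1}, 2^{n_2}, \ldots)$ and $\nu = (1^{m_1}, 2^{m_2}, \ldots)$, observe that $\binom{\mu}{\nu}$ depends only on $(m_2, m_3, \ldots)$, while the constraint $\nu \vdash n$ forces $m_1 = n - \sum_{i \ge 2} i\, m_i$. Interchanging products and sums via the binomial theorem then yields the closed form
\[
F_\mu \;=\; p_1^{n_1} \prod_{i \ge 2} \bigl(p_1^i + p_i\bigr)^{n_i}.
\]
The crucial step becomes showing that $p_1^i + p_i$ is Schur-positive for every $i \ge 2$. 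By the Murnaghan--Nakayama rule applied to a single $i$-cycle, $p_i = \sum_{k=0}^{i-1} (-1)^k s_{(i-k,\,1^k)}$, while $p_1^i = \sum_{\lambda \vdash i} f^\lambda s_\lambda$ with $f^\lambda$ the number of standard Young tableaux of shape $\lambda$. Hence the coefficient of $s_\lambda$ in $p_1^i + p_i$ equals $f^\lambda$ when $\lambda$ is not a hook (trivially non-negative), and $\binom{i-1}{k} + (-1)^k$ when $\lambda = (i-k, 1^k)$; the latter is non-negative because $\binom{i-1}{k} \ge 1$ for all $0 \le k \le i-1$ with $i \ge 2$.

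Finally, since Schur-positivity is preserved under multiplication (the product of two Schur functions expands via non-negative Littlewood--Richardson coefficients), and $p_1 = s_{(1)}$ is trivially Schur-positive, the factorization displays $F_\mu$ as a product of Schur-positive symmetric functions, giving $\alpha_{\lambda,\mu} \in \ZZ_{\ge 0}$. The main obstacle I expect is precisely the Schur-positivity of the atomic factor $p_1^i + p_i$: on its own, $p_i$ is a genuinely signed alternating sum over hooks, and the argument hinges on the regular-representation part $p_1^i$ compensating those negative hook contributions via the uniform bound $\binom{i-1}{k} \ge 1$. Once that is in place, the rest of the proof is purely formal manipulation of symmetric functions.
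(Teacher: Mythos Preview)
Your argument is correct. The factorization
\[
F_\mu \;=\; p_1^{n_1}\prod_{i\ge 2}(p_1^i+p_i)^{n_i}
\]
follows exactly as you describe (the constraint $m_i\le n_i$ automatically forces $m_1\ge 0$ since $\mu\vdash n$), and the Schur-positivity of each factor $p_1^i+p_i$ via the hook computation $\binom{i-1}{k}+(-1)^k\ge 0$ is clean and complete.

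As for comparison: the paper does not supply its own proof of this lemma. It is quoted verbatim as a result of Chan and Lam \cite[Lemma~3.1]{chan_lam_bipartite} and used as a black box throughout. So there is nothing in the present paper to compare your argument against; you have simply provided a self-contained proof where the paper chose to cite one. For what it is worth, your symmetric-function approach (reducing to Schur-positivity of $p_1^i+p_i$) is essentially the one Chan and Lam use in their original paper, so you have independently reconstructed the intended argument.
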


Using enumeration of vertex orientations and Lemma \ref{lem:chan_lam_char_sum_positive}, Chan and Lam in  \cite[page 5]{chan_lam_bipartite} proved the 
following result in order to determine the Laplacian  immanants of a bipartite graph.
\begin{lemma}[Chan and Lam]
	\label{lem:imm_orient_bip}
	Let $G$ be a bipartite graph on $n$ vertices with $L_G$ as its Laplacian matrix. 
	Then for all  $\lambda \vdash n$,  
	$$\im_{\lambda}(L_{G})=\sum_{\mu \in \sP_G(n)}a_{G}^{}(\mu)\sum_{\nu \in P(n)} \chi_{\lambda}^{}(\nu)\binom{\mu}{\nu} = \sum_{\mu \in P(n)}a_{G}^{}(\mu) \alpha_{\lambda, \mu}^{}
	,$$
	where $a_{G}^{}(\mu)$ is the number of vertex orientations of type $\mu$ in $G$.
\end{lemma}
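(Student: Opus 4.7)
The plan is to expand $\im_{\lambda}(L_G)$ by definition, use the bipartite structure to eliminate signs, expand each surviving diagonal entry $d_i$ combinatorially as a sum over neighbors of $i$, and then count the resulting $(\psi,\phi)$ pairs by grouping over vertex orientations. Starting from
\[
\im_{\lambda}(L_G)=\sum_{\psi \in \SSS_n} \chi_{\lambda}(\psi) \prod_{i=1}^{n}(L_G)_{i,\psi(i)},
\]
a summand is nonzero only if every non-trivial cycle of $\psi$ is a cycle of $G$. Since $G$ is bipartite each such cycle has even length, so the product of $(-1)$'s along it is $+1$, and the surviving permutations contribute $\prod_{i=1}^{n}(L_G)_{i,\psi(i)}=\prod_{i\in \mathrm{Fix}(\psi)}d_i$. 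Expanding each $d_i=\sum_{j\in N(i)}1$ recasts this as a sum over auxiliary maps $\phi\colon\mathrm{Fix}(\psi)\to V(G)$ with $\phi(i)\in N(i)$, yielding $\im_{\lambda}(L_G)=\sum_{(\psi,\phi)}\chi_{\lambda}(\psi)$ over valid pairs. To each such pair I would attach the vertex orientation $O_{\psi,\phi}$ of $G$ defined by $O(i)=\phi(i)$ for $i\in\mathrm{Fix}(\psi)$ and $O(i)=\psi(i)$ otherwise; in either case $O(i)\in N(i)$, so this really is an orientation in the sense of Definition~\ref{def:vertex_orient}.

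The heart of the argument is to analyze the fibers of $(\psi,\phi)\mapsto O$. For a fixed orientation $O$ of type $\mu=(1^{n_1},2^{n_2},4^{n_4},\ldots)$, I would argue that $V\setminus\mathrm{Fix}(\psi)$ must be a union of $O$-cycles, i.e., each bi-directed edge and each directed $2k$-cycle of $O$ lies either entirely in or entirely outside $\mathrm{Fix}(\psi)$. For if some boundary vertex $v$ of an $O$-cycle were in $V\setminus\mathrm{Fix}(\psi)$ while its $O$-successor $u=O(v)$ lay in $\mathrm{Fix}(\psi)$, then $\psi(v)=u=\psi(u)$ would contradict injectivity of $\psi$. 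Tails of uni-directed edges of $O$ are therefore forced into $\mathrm{Fix}(\psi)$. Conversely, any sub-collection of $O$-cycles yields a valid pair, with $\psi$'s non-trivial cycles being exactly the chosen $O$-cycles and $\phi$ on the remaining fixed vertices dictated by $O$.

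Choosing $m_{2k}$ of the $n_{2k}$ directed $2k$-cycles for each $k\ge 1$ to promote to non-trivial $\psi$-cycles therefore produces $\prod_{k\ge 1}\binom{n_{2k}}{m_{2k}}$ pairs whose $\psi$ has cycle type $\nu=(1^{m_1},2^{m_2},4^{m_4},\ldots)$, with $m_1$ forced by $|\nu|=n$. Because $n_i=0$ for odd $i\ge 3$, this product equals $\binom{\mu}{\nu}$. Summing $\chi_{\lambda}(\nu)$ with weight $\binom{\mu}{\nu}$ over all $\nu\in P(n)$, and then grouping the orientations $O$ by their type $\mu$ with $a_G(\mu)$ counting those of each type, delivers the claimed identity. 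The only delicate point is the fiber analysis---in particular, ruling out partial inclusion of a long directed $O$-cycle---which drops out immediately from the permutation constraint on $\psi$; the remainder is bookkeeping.
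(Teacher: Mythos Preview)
Your argument is correct and matches the double-counting strategy that the paper attributes to Chan and Lam and then reproduces (in the $B$-subset generality) in the proof of Lemma~\ref{lem:coeff_non-negative}: pair each contributing permutation with an orientation, count the pairs first by $\psi$ (giving $\prod_{i\in\mathrm{Fix}(\psi)}\deg(i)$) and then by $O$ (giving $\binom{\mu}{\nu}$ choices of which $O$-cycles become $\psi$-cycles). Your fiber analysis---showing that $V\setminus\mathrm{Fix}(\psi)$ must be a union of full $O$-cycles via the injectivity contradiction---is exactly the compatibility condition the paper encodes as ``$n_i\ge m_i$ for all $i\ge 2$'' together with ``cycles in $\psi$ represent bidirected edges and directed cycles in $O$''; you have simply made that step more explicit.
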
 


From Section \ref{sec:intro} we recall that the following result is the Lieb's conjecture involving  Laplacian immanants 
of a bipartite graph. 

\begin{theorem}[Lieb's Conjecture for bipartite graphs]
	\label{thm:Lieb_conj_bip}
	Let $G$ be a bipartite graph with $n$ vertices and let $L_G$ be its Laplacian matrix. 
	Then for all $\lambda \vdash n$,  $\od_{\lambda}(L_G)\leq \perm(L_G).$
\end{theorem}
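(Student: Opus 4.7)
The plan is to deduce the inequality directly from Lemma \ref{lem:imm_orient_bip} together with the elementary character bound $\chi_\lambda(\nu)\le \chi_\lambda(\id)$ valid for every irreducible character of $\SSS_n$ (the trace of any finite-order unitary operator has absolute value at most its dimension, and here the trace is real). Everything else is a clean non-negative rearrangement.

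First I would specialise Lemma \ref{lem:imm_orient_bip} to $\lambda=(n)$. Since the trivial character $\chi_{(n)}$ equals $1$ on every conjugacy class, the formula reduces to
\begin{equation*}
\perm(L_G)=\im_{(n)}(L_G)=\sum_{\mu\in P(n)} a_G(\mu)\,\alpha_{(n),\mu},\qquad \alpha_{(n),\mu}=\sum_{\nu\in P(n)}\binom{\mu}{\nu}.
\end{equation*}
For a general $\lambda\vdash n$, the same lemma gives $\im_\lambda(L_G)=\sum_\mu a_G(\mu)\alpha_{\lambda,\mu}$. The key step is the pointwise bound $\alpha_{\lambda,\mu}\le \chi_\lambda(\id)\,\alpha_{(n),\mu}$. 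This is immediate: the products of binomials $\binom{\mu}{\nu}$ appearing in the definition \eqref{eqn:def_alpha} are non-negative integers, so dominating $\chi_\lambda(\nu)$ by $\chi_\lambda(\id)$ inside the sum over $\nu$ yields the claim termwise.

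Finally, the weights $a_G(\mu)$ are non-negative integers (they are cardinalities of sets of orientations), so summing the pointwise inequality against them preserves the direction and gives
\begin{equation*}
\im_\lambda(L_G)\;\le\;\chi_\lambda(\id)\sum_{\mu}a_G(\mu)\,\alpha_{(n),\mu}\;=\;\chi_\lambda(\id)\,\perm(L_G).
\end{equation*}
Dividing by $\chi_\lambda(\id)$ yields $\od_\lambda(L_G)\le \perm(L_G)$, which is exactly Lieb's conjecture in this setting. I do not expect a real obstacle: bipartiteness enters only to justify the use of Lemma \ref{lem:imm_orient_bip}, and after that the argument is a one-line majorisation. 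The only small thing to keep straight is that the Chan--Lam identity already has the ``right'' non-negative combinatorial shape, which is precisely what makes the termwise comparison with the permanent work without any cancellation.
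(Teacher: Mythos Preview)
Your proof is correct and follows essentially the same approach as the paper: both apply Lemma~\ref{lem:imm_orient_bip}, use the non-negativity of $a_G(\mu)$ and $\binom{\mu}{\nu}$, and bound $\chi_\lambda(\nu)$ by $\chi_\lambda(\id)$ termwise to compare with the $\lambda=(n)$ case. The only cosmetic difference is that the paper works directly with $\od_\lambda$ and the bound $|\chi_\lambda(\nu)/\chi_\lambda(\id)|\le 1$, whereas you bound $\im_\lambda$ first and divide at the end.
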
 

\begin{proof}
	From Lemma \ref{lem:imm_orient_bip}, we have 
	\begin{align*}
	\od_{\lambda}(L_{G})=&\frac{\im_{\lambda}(L_{G})}{\chi_{\lambda}^{}(\id)} =  \sum_{\mu \in \sP_G(n)}a_{G}^{}(\mu)\sum_{\nu \in P(n)} \frac{\chi_{\lambda}^{}(\nu)}{\chi_{\lambda}^{}(\id)}\binom{\mu}{\nu} \\
	& \leq   \sum_{\mu \in P(n)}a_{G}^{}(\mu)\sum_{\nu \in P(n)} \left|\frac{\chi_{\lambda}^{}(\nu)}{\chi_{\lambda}^{}(\id)}\right|\binom{\mu}{\nu}\\
&	 \leq   \sum_{\mu \in P(n)}a_{G}^{}(\mu)\sum_{\nu \in P(n)} \binom{\mu}{\nu}=\perm(L_G).
	\end{align*}
	
	The above identity follows from the fact that for each $\lambda \vdash n$ and 
	for each permutation $\psi\in \SSS_n$, we have  $\left|\frac{\chi_{\lambda}^{}(\psi)}{\chi_{\lambda}^{}(\id)}\right|\leq 1$. 
	Hence the proof is complete.
\end{proof}

\vspace{2mm}
It is well known that for a bipartite graph $G$ with $n$ vertices, $L_G$ is a positive semidefinite matrix. Thus  combining Theorem \ref{thm:Lieb_conj_bip} and Schur's result from Section \ref{sec:intro} gives the  following inequalities.
\begin{equation}
\label{eqn:Lieb_schur_ineq}
\det(L_G^{})\leq \od_{\lambda}(L_{G})\leq \perm(L_G) \ \ \mbox{ for all } \lambda \vdash n.
\end{equation}
It is easy to see that inequalities given in \eqref{eqn:Lieb_schur_ineq} solves an extreme value problem of finding the max-min pair in the set of all partitions of $n$ for the Laplacian immanant of a bipartite graph. Thus the maximum and the minimum value of $\od_{\lambda}(L_{G})$ for a bipartite graph $G$ with $n$ vertices are  attained  at $\lambda=n$ and  $\lambda=1^n$, respectively.

\section{Coefficients of the Laplacian immanantal polynomial}
\label{sec:coeff_imm_poly} 
From Lemma \ref{lem:imm_orient_bip}, enumeration of vertex orientations is used to express $\im_{\lambda}(L_G)$ as a sum of non-negative integers.  
 The important point to note 
here is a  combinatorial interpretation for each coefficient $b_{\lambda,r}(L_{G})$  in the Laplacian immanantal polynomial $\phi_{\lambda}(L_{G},x)$ of  the bipartite graph $G$ 
using vertex orientations. 

Let $G$ be a bipartite graph with vertex set $[n]$ and 
let 
$L_G=(\ell_{i,j})_{1\leq i,j\leq n}$ be the Laplacian matrix of $G$. 
As done in the proof of Lemma \ref{lem:imm_orient_bip} in \cite{chan_lam_bipartite}, $B$-orientations will be used to  express the coefficient  $b_{\lambda,r}(L_G)$ as a sum of non-negative terms. 
Let $B\subseteq [n]$ with $|B|=r$ and let $G_B$ be the subgraph induced by 
$G$ on the set $B$. We orient each vertex $v$ of $B$ only  to one of its neighbor 
(which may or may not belong to $B$) in $G$. Thus  each $v\in B$ has  $\deg(v)$ different choices for assigning an arrow to $v$. 
 Such vertex orientations are known as $B$-orientations in $G$. 

For a fixed $B\subseteq [n]$ with $|B|=r$, let $\sO_{G,B}^{}(\mu)$ be the set of all  $B$-orientations in $G$ which   have type $\mu$.  Define $a_{G,B}(\mu)$ to be the  number of 
elements in $\sO_{G,B}^{}(\mu)$, {\it i.e.}, $a_{G,B}^{}(\mu)=|\sO_{G,B}^{}(\mu)|$. Also define 
$\displaystyle\sO_{G,r}^{}(\mu)=\cup_{B\subseteq [n]  \mbox{ with } |B|=r}^{} \sO_{G,B}^{}(\mu)  \mbox{ and }
a_{G,r}^{}(\mu)=|\sO_{G,r}^{}(\mu)|.$ In other words, one can check that  $a_{G,r}^{}(\mu)=\sum_{B\subseteq [n]  \mbox{ with } |B|=r}^{} |\sO_{G,B}^{}(\mu)|.$ 
Let $\sP_{G,B}(n)$ be the set of all partitions of $n$ obtained from  $B$-orientations in  $G$.  Since $G$ is a bipartite graph, the induced subgraph $G_B$ 
is also a bipartite graph, each $\mu \in \sP_{G,B}(n)$ has  type $\mu=(1^{n_1},2^{n_2},4^{n_4},\ldots)$ for some non-negative integers $n_i$.
 In the proof of the following result,  these terms are used to get  a combinatorial interpretation for the coefficient $b_{\lambda,r}(L_G)$ as a sum of non-negative integers.  

\begin{lemma}
	\label{lem:coeff_non-negative}
	Let $G$ be a bipartite graph on $[n]$ with the Laplacian matrix $L_G$. Then, for all $\lambda \vdash n$ and 
	for $0\leq r \leq n$,  
	$$b_{\lambda,r}(L_G)=\sum_{\mu \in P(n)} a_{G,r}^{}(\mu)\alpha_{\lambda,\mu}^{}.$$ 
	In particular, $b_{\lambda,r}(L_G)$ is a non-negative integer for all $\lambda \vdash n$ and for $r=0,1,\ldots,n.$
\end{lemma}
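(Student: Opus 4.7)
My plan is to mirror Chan and Lam's proof of Lemma~\ref{lem:imm_orient_bip}, but to track the expansion of $\im_\lambda(xI-L_G)$ in powers of $x$. Starting from
$$\im_\lambda(xI-L_G)=\sum_{\psi\in\SSS_n}\chi_\lambda(\psi)\prod_{i=1}^n(xI-L_G)_{i,\psi(i)},$$
let $F_\psi$ denote the fixed-point set of $\psi$. The contribution of each $\psi$ factors as $\prod_{i\in F_\psi}(x-\ell_{i,i})\prod_{i\in\mathrm{supp}(\psi)}(-\ell_{i,\psi(i)})$. Distributing the first product amounts to selecting, for each $i\in F_\psi$, either $x$ or $-\ell_{i,i}$; if $S\subseteq F_\psi$ is the set of fixed points that pick $-\ell_{i,i}$ and we set $B:=S\sqcup\mathrm{supp}(\psi)$, then the resulting monomial is $(-1)^{|B|}x^{n-|B|}\prod_{i\in B}\ell_{i,\psi(i)}$. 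Extracting the coefficient of $(-1)^r x^{n-r}$ gives
$$b_{\lambda,r}(L_G)=\sum_{\substack{B\subseteq[n]\\|B|=r}}\sum_{\substack{\psi\in\SSS_n\\\mathrm{supp}(\psi)\subseteq B}}\chi_\lambda(\psi)\prod_{i\in B}\ell_{i,\psi(i)}.$$

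Next I would reinterpret the inner product via $B$-orientations. For each $i\in B\cap F_\psi$ I expand $\ell_{i,i}=\deg(i)=\sum_{u\in N(i)}1$ and read each summand as the choice of an arrow $O(i)=u$; for each $i\in\mathrm{supp}(\psi)$ I set $O(i):=\psi(i)$, which must lie in $N(i)$ in order for $\ell_{i,\psi(i)}=-1$ to be non-zero. This realises $O$ as an element of $\sO_{G,B}^{}$ satisfying $O|_{\mathrm{supp}(\psi)}=\psi|_{\mathrm{supp}(\psi)}$, and one verifies
$$\prod_{i\in B}\ell_{i,\psi(i)}=(-1)^{|\mathrm{supp}(\psi)|}\bigl|\{O\in\sO_{G,B}^{}:O|_{\mathrm{supp}(\psi)}=\psi|_{\mathrm{supp}(\psi)}\}\bigr|.$$
Since $G$ is bipartite, every non-trivial cycle of $\psi$ traces a closed walk of even length in $G$, so $|\mathrm{supp}(\psi)|$ is even and the sign is $+1$.

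Finally I would swap the order of summation. For a fixed $B$-orientation $O$ of type $\mu=(1^{n_1},2^{n_2},4^{n_4},\ldots)$, the permutations $\psi$ compatible with $O$ on $\mathrm{supp}(\psi)$ arise by choosing, for each $j\geq 1$, some $m_{2j}\leq n_{2j}$ of the bidirected edges (when $j=1$) or directed $2j$-cycles (when $j\geq 2$) of $O$ to become the non-trivial cycles of $\psi$; all other vertices in $B$ become fixed points. Each resulting $\psi$ has cycle type $\nu=(1^{p_1},2^{m_2},4^{m_4},\ldots)$ and is counted $\prod_{j\geq 1}\binom{n_{2j}}{m_{2j}}=\binom{\mu}{\nu}$ times, so class-function-ness of $\chi_\lambda$ yields
$$\sum_{\substack{\psi\\\mathrm{supp}(\psi)\subseteq B}}\chi_\lambda(\psi)\prod_{i\in B}\ell_{i,\psi(i)}=\sum_{\mu\in P(n)}a_{G,B}^{}(\mu)\sum_{\nu\in P(n)}\chi_\lambda(\nu)\binom{\mu}{\nu}=\sum_{\mu\in P(n)} a_{G,B}^{}(\mu)\,\alpha_{\lambda,\mu}.$$
Summing over $B\subseteq[n]$ with $|B|=r$ and using $a_{G,r}^{}(\mu)=\sum_{|B|=r}a_{G,B}^{}(\mu)$ gives the stated identity. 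Non-negativity is then immediate: $a_{G,r}^{}(\mu)\in\ZZ_{\geq 0}$ is a cardinality, and $\alpha_{\lambda,\mu}\in\ZZ_{\geq 0}$ by Lemma~\ref{lem:chan_lam_char_sum_positive}. I expect the main obstacle to be the sign bookkeeping: checking that the $(-1)^r$ extracted from the $x$-expansion matches exactly the combined sign $(-1)^{|B\cap F_\psi|+|\mathrm{supp}(\psi)|}$ coming from the two sources of minus signs in $xI-L_G$, and that bipartiteness of $G$ reduces the residual sign $(-1)^{|\mathrm{supp}(\psi)|}$ to $+1$, so that the final expression is a genuinely non-negative combination of the non-negative integers $\alpha_{\lambda,\mu}$.
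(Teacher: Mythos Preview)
Your proposal is correct and follows essentially the same approach as the paper. Both proofs rewrite $b_{\lambda,r}(L_G)$ as a sum over subsets $B$ of size $r$ of $\sum_{\psi}\chi_\lambda(\psi)\prod_{i\in B}\ell_{i,\psi(i)}$, and then double-count pairs $(\psi,O)$ where $O$ is a $B$-orientation extending the non-trivial cycles of $\psi$; the paper packages this via the block matrix $\im_\lambda\bigl[\begin{smallmatrix}L_G[B|B]&0\\0&I\end{smallmatrix}\bigr]$ and the set $S_\nu^B$, while you obtain it directly from expanding $\im_\lambda(xI-L_G)$ and make the compatibility $O|_{\mathrm{supp}(\psi)}=\psi|_{\mathrm{supp}(\psi)}$ and the sign cancellation via bipartiteness more explicit, but the combinatorics is identical.
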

\begin{proof}
	For a subset $B\subseteq [n]$  with  $|B|=r$, define 
	$b_{\lambda,B}(L_G)=\im_{\lambda}
	\left[
	\begin{matrix}
	L_{G}[B|B] & 0 \\
	0 & I
	\end{matrix}
	\right],
	$ 
	where $L_{G}[B|B]$ is a submatrix of $L_{G}$ induced by the rows and columns 
	with indices in $B$ and $I$ is the $(n-r)\times (n-r)$ identity matrix. 
	It is easy to check  that $b_{\lambda,r}(L_G)=\sum\limits_{B\subseteq [n]  \mbox{ with } |B|=r} b_{\lambda,B}(L_G).$

	We recall that $G$ is a bipartite graph and the induced subgraph $G_B$ 
	is also a bipartite graph. 
  Thus it is natural to see that a permutation $\psi \in \SSS_n$ with cycle type $\mu$ contributes in $b_{\lambda,B}(L_G)$ if and only if there is a $B$-orientation $O\in \sO_{G,B}(\mu)$ and $\psi $ fixes $[n]-B$. 
	For a given $\nu=(1^{m_1},2^{m_2},4^{m_4},\ldots) \vdash n$,   
	define the set 
	$S_{\nu}^B:=\{(\psi,O): \prod_{i\in B} \ell_{i,\psi(i)}\neq 0,\type(\psi)=\nu, \psi $ fixes $[n]-B$ and $O\in \sO_{G,B}(\mu)$ with $\mu=(1^{n_1},2^{n_2},4^{n_4},\ldots) \in \sP_{G,B}(n)$ 
	such that 
	$n_i\geq m_i$ for all $i\geq 2\}$. For a given $\psi\in \SSS_n$, cycles in $\psi$ with size $2$ represent bidirected edges in $O$ and each cycle of $\psi$ 
	with size strictly greater than $2$ gives a directed cycle in $O$. 
	Each even cycle and each fixed point $v$  of $\psi$ contribute $1$ and $\deg(v)$ in  $\prod_{i\in B} \ell_{i,\psi(i)}$, respectively. This product of degrees gives the number of   
	$B$-orientation $O$ of type $\mu\in \sP_{G,B}(n)$ such that $n_i\geq m_i$ for 
	all $i\geq 2$.  Therefore,   the number of elements in $S_{\nu}^B$ equals  
	to the sum of $\prod_{i\in B} \ell_{i,\psi(i)}$ over all permutations which fix $[n]-B$ and 
	whose cycle type is $\nu$. 
	 
	We can also enumerate the set $S_{\nu}^B$ in another way.  
	Take a $B$-orientation $O$ of type 
	$\mu=(1^{n_1},2^{n_2},4^{n_4},\ldots) \in\sP_{G,B}(n)$ with $n_{i}\geq m_i$ for all 
	$i\geq 2$.  Therefore we have  $\binom{n_2}{m_2}$ ways to pick  $m_2$ bidirected edges and $\binom{n_4}{m_4}$ ways to pick $m_4$   directed $4$-cycles and so on. 
	So there are $\binom{n_2}{m_2}\binom{n_4}{m_4}\cdots=\binom{\mu}{\nu}$ 
	ways to construct the ordered pair $(\psi,O)$. Thus we get

	$$|S_{\nu}^B|=\sum_{\type(\psi)=\nu, \psi \mbox{ fixes } [n]-B}\prod_{i\in B}\ell_{i,\psi(i)} = 
	 \sum_{\mu\in \sP_{G,B}(n)}\binom{\mu}{\nu}a_{G,B}^{}(\mu).$$

	By multiplying $\chi_{\lambda}^{}(\nu)$ and by summing over all $B$ with $|B|=r$, the above expression gives us the coefficient  $b_{\lambda,r}(L_G)$ of $(-1)^rx^{n-r}$ in $\phi(x,L_{G})$ involving the  term  $a_{G,r}^{}(\mu)$ as follows 
	\begin{align*}
		b_{\lambda,r}(L_G) & =  \sum_{B\subseteq [n]  \mbox{ with } |B|=r} \sum_{\nu \in P(n)} \chi_{\lambda}^{}(\nu) \sum_{\type(\psi)=\nu, \psi \mbox{ fixes } [n]-B} \prod_{i\in B} \ell_{i,\psi(i)}. \\
		& =  \sum_{B\subseteq [n]  \mbox{ with } |B|=r} \sum_{\nu \in P(n)} \chi_{\lambda}^{}(\nu) \sum_{\mu\in \sP_{G,B}(n)}\binom{\mu}{\nu}a_{G,B}^{}(\mu) \\
		& =  \sum_{\mu\in P(n)}a_{G,r}^{}(\mu) \sum_{\nu \in P(n)} \chi_{\lambda}^{}(\nu) \binom{\mu}{\nu},
	\end{align*}
where the last equality follows from exchanging the order of summations. 
Now,  using \eqref{eqn:def_alpha} and Lemma \ref{lem:chan_lam_char_sum_positive} the proof is completed. 
\end{proof}

\vspace{2mm}
From the above proof, we recall that $$b_{\lambda,r}(L_G)=\sum\limits_{B\subseteq [n]  \mbox{ with } |B|=r} b_{\lambda,B}(L_G), \mbox{ where }
b_{\lambda,B}(L_G)=\im_{\lambda}
\left[
\begin{matrix}
	L_{G}[B|B] & 0 \\
	0 & I
\end{matrix}
\right].$$
  Thus using  Lemmas \ref{lem:chan_lam_char_sum_positive} and  
\ref{lem:coeff_non-negative}, the proof of the following theorem is identical to the proof of  \eqref{eqn:Lieb_schur_ineq} and hence we omit it and merely state the result. 

\begin{theorem}
	\label{thm:gen_lieb_conj}
	 For all $\lambda \vdash n$ and for all bipartite graph $G$ on $n$ vertices, we have 
	$$
	b_{1^n,r}(L_G)\leq 
	\frac{b_{\lambda,r}(L_G)}{\chi_{\lambda}^{}(\id)}\leq b_{n,r}(L_G) \mbox{ for } r=0,1,2,\ldots,n.$$
\end{theorem}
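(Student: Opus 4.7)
The plan is to follow the template set by the proof of inequalities \eqref{eqn:Lieb_schur_ineq}, applied locally to each of the terms in the subset-decomposition from the proof of Lemma \ref{lem:coeff_non-negative}. Recall from there that
\[
 b_{\lambda,r}(L_G) \;=\; \sum_{B \subseteq [n],\,|B|=r} b_{\lambda,B}(L_G), \qquad b_{\lambda,B}(L_G) \;=\; \im_{\lambda}\!\begin{pmatrix} L_G[B|B] & 0 \\ 0 & I_{n-r} \end{pmatrix}.
\]
Thus it suffices to prove, for every fixed $B$ of size $r$, both inequalities with $b_{\lambda,r}$ replaced by $b_{\lambda,B}$, and then sum over $B$.

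For the lower bound, I would observe that the block matrix $M_B := \mathrm{diag}(L_G[B|B],\,I_{n-r})$ is positive semidefinite Hermitian, since $L_G$ is PSD (so every principal submatrix $L_G[B|B]$ is PSD) and $I_{n-r}$ is PSD. Schur's theorem from the introduction then gives $\det(M_B) \le \od_{\lambda}(M_B)$, which is exactly $b_{1^n,B}(L_G) \le b_{\lambda,B}(L_G)/\chi_{\lambda}(\id)$. Summing over all $B \subseteq [n]$ of size $r$ yields the lower bound $b_{1^n,r}(L_G) \le b_{\lambda,r}(L_G)/\chi_{\lambda}(\id)$.

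For the upper bound, rather than invoking Theorem \ref{thm:Lieb_conj_bip} (which is stated for the Laplacian itself, not for the hybrid matrix $M_B$), I would repeat the argument of that theorem directly on the combinatorial expansion from Lemma \ref{lem:coeff_non-negative}. Concretely,
\[
\frac{b_{\lambda,r}(L_G)}{\chi_{\lambda}(\id)} \;=\; \sum_{\mu \in P(n)} a_{G,r}(\mu) \sum_{\nu \in P(n)} \frac{\chi_{\lambda}(\nu)}{\chi_{\lambda}(\id)}\binom{\mu}{\nu} \;\le\; \sum_{\mu}a_{G,r}(\mu)\sum_{\nu}\binom{\mu}{\nu} \;=\; b_{n,r}(L_G),
\]
where I use $|\chi_{\lambda}(\nu)/\chi_{\lambda}(\id)| \le 1$ for the middle inequality, and for the last equality I recognize that $\chi_n \equiv 1$, so $\alpha_{n,\mu} = \sum_{\nu}\binom{\mu}{\nu}$, and invoke Lemma \ref{lem:coeff_non-negative} again.

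I do not expect a genuine obstacle: the only point requiring a moment's care is the PSD-ness of the block matrix $M_B$ needed for Schur, and the matching of $\im_{\lambda}(M_B)$ with $b_{\lambda,B}(L_G)$ — both of which are essentially built into the definition of $b_{\lambda,B}$ given in the proof of Lemma \ref{lem:coeff_non-negative}. The upper bound needs no PSD input beyond what the character-ratio bound already provides.
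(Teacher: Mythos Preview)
Your proposal is correct and matches the paper's (omitted) proof: the paper recalls the decomposition $b_{\lambda,r}(L_G)=\sum_{B} b_{\lambda,B}(L_G)$ with $b_{\lambda,B}(L_G)=\im_{\lambda}\bigl(\mathrm{diag}(L_G[B|B],I)\bigr)$ immediately before the theorem and then says the argument is identical to that of \eqref{eqn:Lieb_schur_ineq} using Lemmas \ref{lem:chan_lam_char_sum_positive} and \ref{lem:coeff_non-negative}. That is exactly your plan---Schur's inequality applied to each PSD block matrix $M_B$ for the lower bound, and the character-ratio estimate on the expansion of Lemma \ref{lem:coeff_non-negative} for the upper bound.
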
 

When $r=n$, Theorem \ref{thm:Lieb_conj_bip} is a special case of Theorem \ref{thm:gen_lieb_conj}. 
Thus using Theorem \ref{thm:gen_lieb_conj} for a bipartite graph $G$ and for $0\leq r\leq n$  the maximum and the minimum value of $\frac{b_{\lambda,r}(L_G)}{\chi_{\lambda}^{}(\id)}$  are attained at $\lambda=n$ and $\lambda=1^n$, respectively.

\section{Proof of Theorem \ref{thm:main_thm}}
\label{sec:proof_main_thm}

Let $T_1$ and $T_2$ be two trees with $n$ vertices such that $T_2$ 
covers $T_1$ in $\GTS_n$ poset. 
For a given $B\subseteq [n]$ and for $i=1,2$, let $\sO_{T_i, B}(\mu)$ be the set of all $B$-orientations with type $\mu$ in $T_i$. 
For $0\leq r \leq n$, let $\sO_{T_i, r}(\mu)=\bigcup_{B\subseteq [n] \mbox{ with } |B|=r} \sO_{T_i, B}(\mu)$.
Nagar and Sivasubramanian  \cite{mukesh-siva-immanantal_polynomial} 
used the following result involving $\sO_{T_i, r}(\mu)$ in   proving Theorem \ref{thm:coeff_trees_gts}. 
In this paper, we shall use an identical result to prove our main result. 
\begin{theorem}[Nagar and Sivasubramanian]
	\label{thm:mukesh_injection}
Let $T_1$ and $T_2$ be two trees on $n$ vertices such that $T_2$ covers 
$T_1$ in $\GTS_n$. 
Then for each type $\mu=2^j,1^{n-2j}$ with $0\leq j \leq \nhalf$ and for $0\leq r \leq n$, there is an injective map 
$\theta:\sO_{T_2, r}(\mu)\rightarrow \sO_{T_1, r}(\mu)$.
\end{theorem}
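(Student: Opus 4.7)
The plan is to construct the injection $\theta$ explicitly by tracking how arrows transform under the $\GTS$ operation. Let $u_1, \ldots, u_s$ denote the neighbors of $k$ in $T_1$ other than $k-1$. The $\GTS$ operation replaces the edges $\{k, u_i\}$ of $T_1$ by the edges $\{1, u_i\}$ of $T_2$, while leaving every other edge (including the path $P_{1,k}$ and its interior vertices of degree $2$) identical in the two trees. This yields a natural edge correspondence pairing $\{k, u_i\} \leftrightarrow \{1, u_i\}$ with every other edge matched to itself. In particular, $k$ is a leaf of $T_2$, so any $O \in \sO_{T_2, r}(\mu)$ with $k \in B$ has the forced arrow $O(k) = k-1$.

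Given $O \in \sO_{T_2, r}(\mu)$ with $\mu = 2^j, 1^{n-2j}$, I would define $\theta(O)$ by local modification at $\{1, k, u_1, \ldots, u_s\}$. Every arrow $v \to w$ with $v \notin \{1, u_1, \ldots, u_s\}$ is retained via the edge correspondence; each arrow $u_i \to 1$ is rerouted to $u_i \to k$; and the arrow from vertex $1$ in $O$ (if $1 \in B$) is handled according to its target. If $O(1)$ is already a neighbor of $1$ in $T_1$, the arrow stays; if $O(1) = u_j$, then the arrow is transferred to $k$ as $k \to u_j$, and simultaneously vertex $1$ leaves $B'$ while $k$ enters it, so that $|B'| = |B|$. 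The one delicate subcase is $1, k \in B$ with $O(1) = u_j$: the forced arrow $O(k) = k-1$ blocks the direct transfer, and one reconciles by an additional local rearrangement on the end-edges of $P_{1,k}$, redistributing the potential bidirected structure at $\{k-1, k\}$ in $T_2$ to the corresponding position near $1$ in $T_1$.

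To verify that $\theta(O) \in \sO_{T_1, r}(\mu)$, one must check both $|B'| = |B|$ (immediate in every case by construction) and preservation of the number of bidirected edges. Since trees contain no cycles, the type $\mu = 2^j, 1^{n-2j}$ is determined solely by this count. Bidirected edges with both endpoints outside $\{1, k, u_1, \ldots, u_s\}$ are trivially unchanged; a bidirected edge $\{1, u_i\}$ of $O$ corresponds to a bidirected edge $\{k, u_i\}$ of $\theta(O)$ under the edge correspondence; and bidirected edges touching $P_{1,k}$ are tracked through the case analysis, with the delicate subcase handled by the rearrangement described. For injectivity, $O$ can be read back from $\theta(O)$ by inspecting the arrows at $1$, $k$, and $u_1, \ldots, u_s$ in $\theta(O)$: the pattern uniquely identifies which case of the construction was invoked.

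The main obstacle is the delicate subcase where $1, k \in B$ and $O(1) = u_j$. One must verify that the rearrangement at the ends of $P_{1,k}$ preserves the bidirected-edge count exactly, neither creating nor destroying one spuriously, which requires a short but careful case analysis on the arrows at the first and last interior vertices of $P_{1,k}$. Once this is handled, the remainder of the construction is straightforward bookkeeping with the edge correspondence and verification of the size condition $|B'| = |B|$.
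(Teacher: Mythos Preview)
Your construction is on the right track in the easy cases, but the ``delicate subcase'' is not a detail that yields to a local rearrangement at the two ends of $P_{1,k}$; it is the heart of the argument, and your proposed fix does not work. Concretely, take $k=4$, $B=\{1,2,3,4\}$, and in $T_2$ set $O(1)=u_j\in Y$, $O(2)=3$, $O(3)=4$, $O(4)=3$. There is exactly one bidirected edge, namely $\{3,4\}$. Your rule gives $O'(4)=u_j$ and (the only sensible choice) $O'(1)=2$, with $O'(2)=3$, $O'(3)=4$ unchanged; now there are \emph{no} bidirected edges. The discrepancy is that losing the bidirected edge at $\{k-1,k\}$ and gaining one at $\{1,2\}$ are controlled by the independent events $O(k-1)=k$ and $O(2)=1$, so a fix that only touches vertices $2$ and $k-1$ would have to cascade: changing the arrow at $2$ can create or destroy a bidirected edge at $\{2,3\}$, which forces a change at $3$, and so on down the path.

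The approach used in the source (and sketched in this paper for the bipartite generalization, Lemma~\ref{lem:lem_injection1}) resolves this by reflecting the \emph{entire} path. One defines $B'=(B\cap X)\cup(B\cap Y)\cup B_P^t$, where $B_P^t$ replaces each $i\in B\cap P_{1,k}$ by $k+1-i$; in the problematic case $O(1)\in Y$ one sends the arrow at vertex $i$ on the path to the arrow at vertex $k+1-i$, flipping ``towards $1$'' to ``towards $k$'' and vice versa. This global reflection maps a bidirected edge $\{i,i+1\}$ on the path to the bidirected edge $\{k-i,k+1-i\}$, so the type $\mu$ is preserved automatically with no case analysis, and injectivity is then argued on the paired union $\sO_{T_2,B}(\mu)\cup\sO_{T_2,B'}(\mu)\to\sO_{T_1,B}(\mu)\cup\sO_{T_1,B'}(\mu)$. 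Your endpoint-swap $B\mapsto (B\setminus\{1\})\cup\{k\}$ should be replaced by this full reflection $B\mapsto B'$; once you do that, the rest of your outline goes through.
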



Let $G_1$ and $G_2$ be two bipartite graphs with $n$ vertices such that 
$G_2=\GGS(G_1)$, where $1$ and $k$ are  the recipient and the donor in $G_1$, respectively. 
Let $P_{1,k}$ be the unique path between the vertices $1$ and $k$ which witness this $\GGS$ operation. 
Then $G_1$ and $G_2$ can be illustrated in Figure \ref{fig:injection_gts_n^c_example}, 
for some subgraphs $X$ and $Y$ of both $G_1$ and $G_2$. 
For the convenience we will assume $X$ and $Y$ as vertex subsets  of $[n]$ and 
the following union and intersection are on these subsets.
For a subset $B\subseteq [n]$, define $B'=(B\cap X)\cup(B\cap Y)\cup B_P^{t}$, 
where $B_P^t$ contains vertex $k+1-i$ if $B\cap P_{1,k}$ contains the vertex $i$ of $P_{1,k}$. 
Since the proof of the following lemma is similar to the proof of 
Theorem  \ref{thm:mukesh_injection}, we only sketch our proof. 
\begin{lemma} 
	\label{lem:lem_injection1}
	Let $G_1$ and $G_2$ be two bipartite graphs with $n$ vertices such that $G_2=\GGS(G_1)$. 	Then for all $r \geq 0$ and for all $\mu \in P(n)$  
 there is an injective map $\delta:\sO_{G_2, r}(\mu)\rightarrow \sO_{G_1, r}(\mu)$.	
\end{lemma}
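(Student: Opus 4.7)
My plan is to extend the construction of Theorem~\ref{thm:mukesh_injection} in two respects: from trees to bipartite graphs admitting a $\GGS$ step, and from involution cycle types $\mu = 2^j 1^{n-2j}$ (the only types realisable by orientations of trees, since trees have no directed cycles of length $\ge 4$) to all $\mu \in P(n)$. Write the witnessing path as $P_{1,k}: 1 = p_1, p_2, \ldots, p_{k-1}, p_k = k$, with the interior vertices $p_2, \ldots, p_{k-1}$ of degree $2$ in both $G_1$ and $G_2$, and with $X, Y, V(P_{1,k})$ partitioning $[n]$ as fixed before the statement. By the hypothesis that $1$ and $k$ do not lie on a common cycle of $G_1$, every cycle of $G_1$ is contained in $X \cup \{1\}$ or in $Y \cup \{k\}$, and every cycle of $G_2$ is contained in $X \cup \{1\}$ or in $Y \cup \{1\}$; in particular, no cycle in either graph meets the interior of $P_{1,k}$.

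Given $O \in \sO_{G_2, B}(\mu)$, I would construct $\delta(O)$ by transporting arrows along a canonical edge correspondence between $G_2$ and $G_1$: edges inside $X$, edges inside $Y$, and spine edges $(p_i, p_{i+1})$ of $P_{1,k}$ are identified directly, while the $G_2$-edges $(1, y)$ with $y \in Y$ are identified with the $G_1$-edges $(k, y)$. Arrows placed by $O$ on vertices in $X$ or $Y$ are copied verbatim to the corresponding edges of $G_1$. Arrows on the spine, together with the spoke arrows at $1$ in $G_2$, are reorganised by the path-reflection bijection developed for the tree case in \cite{mukesh-siva-immanantal_polynomial}: the reflection $i \leftrightarrow k+1-i$ on $P_{1,k}$ swaps $B$ with $B' = (B \cap X) \cup (B \cap Y) \cup B_P^t$, and in its natural extension to the $\GGS$ setting, spoke arrows incident to $1$ directed into $Y$ (in $G_2$) are moved onto $k$ (in $G_1$), while spoke arrows at $Y$-vertices directed into $1$ are redirected into $k$. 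Since $|B'| = |B| = r$, the target lives in $\sO_{G_1, B'}(\mu) \subseteq \sO_{G_1, r}(\mu)$.

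The main obstacle is verifying that $\delta$ preserves the full cycle type $\mu$, including parts of size $\geq 4$ absent in the tree case. Here the cut-vertex structure established above does the heavy lifting: any directed cycle of length $\ge 4$ in $O$ is forced to sit entirely inside $X \cup \{1\}$ or entirely inside $Y \cup \{1\}$ of $G_2$, and hence, under the edge identification, is sent to a directed cycle of the same length inside $X \cup \{1\}$ or $Y \cup \{k\}$ of $G_1$. Bidirected edges are tracked by the same correspondence, while free (single) arrows along the spine and at the endpoints are handled by the type-preserving properties of the path-reflection bijection of \cite{mukesh-siva-immanantal_polynomial}. Combining these three local case analyses (inside $X \cup \{1\}$, inside $Y$-side, and along the spine) yields $\delta(O) \in \sO_{G_1, B'}(\mu)$. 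Injectivity of $\delta$ follows because each step is locally reversible once $B'$ is recorded: the edge identification is a bijection of edge sets, the path-reflection $i \leftrightarrow k+1-i$ is an involution, and the spoke reassignment at the endpoints is determined by the $Y$-portion of the arrows of $O$.
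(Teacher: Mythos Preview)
Your overall strategy matches the paper's: transport arrows via the natural edge bijection between $G_2$ and $G_1$ (identifying each $G_2$-edge $(1,y)$, $y\in Y$, with the $G_1$-edge $(k,y)$), and handle the spine together with the endpoint spokes by the injection of \cite{mukesh-siva-immanantal_polynomial}. Your observation that every directed cycle of length $\ge 4$ in a $B$-orientation of $G_2$ lies entirely in $X\cup\{1\}$ or in $Y\cup\{1\}$ is precisely the new ingredient that lets the tree argument extend.

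There is, however, a genuine gap: you assert that $\delta(O)$ always lands in $\sO_{G_1,B'}(\mu)$, i.e.\ that the path is always reflected. This is false, and is in fact inconsistent with your own cycle analysis. Suppose $O\in\sO_{G_2,B}(\mu)$ contains a directed $4$-cycle through vertex $1$ inside $X\cup\{1\}$; then $O(1)\in X$. As you yourself note, the edge identification sends this cycle to a directed $4$-cycle inside $X\cup\{1\}$ of $G_1$, so vertex $1$ must be oriented in the image and hence must belong to the target index set. But $1\in B'$ only when $k\in B$. Concretely, if $B$ consists of $1$ together with the three $X$-vertices of that $4$-cycle, then $B'$ replaces $1$ by $k$, and no $B'$-orientation of $G_1$ can realise any $4$-cycle at all (the unique $4$-cycle passes through $1\notin B'$). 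Thus $\sO_{G_1,B'}(\mu)=\emptyset$ while $\sO_{G_2,B}(\mu)\ne\emptyset$, and your map cannot land where you claim.

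The paper resolves this with a case distinction that is implicit in the tree-case injection but which you have suppressed. If $1\notin B$ or $O(1)\notin Y$, one does \emph{not} reflect: one simply transports $O$ edge-by-edge to $G_1$ (so arrows $y\to 1$ with $y\in Y$ become $y\to k$) and lands in $\sO_{G_1,B}(\mu)$. Only when $1\in B$ and $O(1)\in Y$ does one apply the reflection $i\leftrightarrow k+1-i$ on the spine, set $O'(k)=O(1)\in Y$, and land in $\sO_{G_1,B'}(\mu)$. Injectivity is then argued on the pair $\{B,B'\}$ together, the two cases being distinguishable in the image by whether $k$ lies in the target set with $O'(k)\in Y$. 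Making this dichotomy explicit is the missing step in your argument.
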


\begin{proof} 
	Let $G_1$ and $G_2$ be two graphs as given in Figure \ref{fig:injection_gts_n^c_example}. 
	For a given $B\subseteq [n]$, let $O \in \sO_{G_2, B}(\mu)$.  
	We first consider the case when either  $1\notin B$ or $O(1)\notin Y$. 
	We define $O'\in \sO_{G_1, B}(\mu)$ as follows. In $O'$, for each vertex 
	$v\in B$, assign the same orientation as it is given in $O$. 
	Therefore, the type $\mu$ for both orientations $O$ and $O'$ is identical. 
	Thus, we get $O'=\delta(O)\in \sO_{G_1, B}(\mu)$ if   $O\in \sO_{G_2, B}(\mu)$ and 
	$O'=\delta(O)\in \sO_{G_1, B'}(\mu)$ if   $O\in \sO_{G_2, B'}(\mu)$. 
	
	We next consider the remaining case, \it{i.e.}, when $1\in B$ and 
	$O(1)=y$, for some $y \in Y$. 
	For each $O \in \sO_{G_2, B}(\mu)$, we define $O'\in \sO_{G_1, B'}(\mu)$ as follows. 
	We orient $y'\in Y$ such that $O'(y')=k$, whenever $y'\in B$ and $O(y')=1$. 
	For each $v\in B-(P_{1,k}\cup\{y\})$, assign the same orientation in 
	$O'$ to $v$ as it is given in $O$. We also assign orientation to the vertex $k$ in $G_1$ as 
	$O'(k)=y$. 
	If  vertex $i \in P_{1,k}$ with $i\neq 1$ is oriented ``towards vertex $1$'' in $O$, then orient vertex $k+1-i$ 
	``towards $k$'' in $O'$ and 
	likewise if $i$ is oriented ``away from 1'' then orient $k+1-i$ ``away from 
	$k$'' in $O'$.   
	It is illustrated in Figure \ref{fig:inj_orient_example} where $P_{1,k}=P_{1,6}$ 
	and $B\cap P_{1,k}=\{1,3,4,5,6\}$. 
	It is very easy to check that the type $\mu$ of $O$ is unchanged in $O'$. 
	One can check that $O'=\delta(O)\in \sO_{G_1, B'}(\mu)$ if   $O\in \sO_{G_2, B}(\mu)$ 
	and 
	$O'\in \sO_{G_1, B}(\mu)$ if   $O\in \sO_{G_2, B'}(\mu)$.  
	Therefore we get an injective map from $O\in \sO_{G_2, B}(\mu)\cup O\in \sO_{G_2, B'}(\mu) $ 
	to 
	$\sO_{G_1, B}(\mu)\cup \sO_{G_1,B'}(\mu)$. 
	Thus   an injective map $\delta:\sO_{G_2, r}(\mu)\rightarrow \sO_{G_1, r}(\mu)$ is constructed by extending the above map to the set $\sO_{G_2, r}(\mu)$ and hence the proof is completed.
\end{proof}

 \begin{figure}[h]
	\centerline{\includegraphics[scale=0.64]{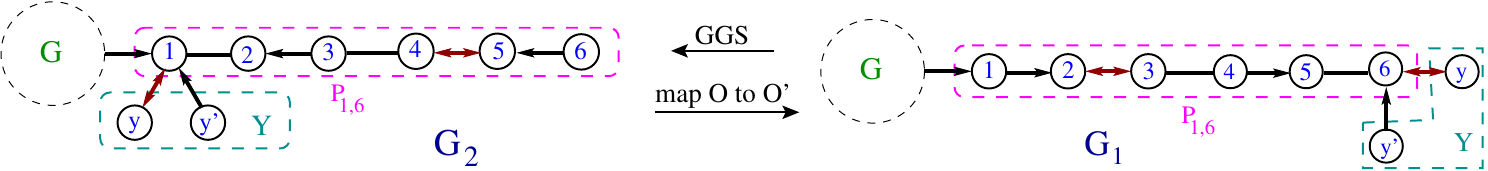}}
	\caption{An example of oriented vertices mapped from  $B\cap P_{1,k}$ to $B'\cap P_{1,k}$.}
	\label{fig:inj_orient_example}
\end{figure}

\vspace{2mm}
Enumerating the sets $\sO_{G_2, r}(\mu)$ and $\sO_{G_1, r}(\mu)$ using Lemma \ref{lem:lem_injection1}, we get $|\sO_{G_2, r}(\mu)| \leq  |\sO_{G_1, r}(\mu)|$ for all $r\geq 0$ and for all $\mu \in P(n)$. Thus combining Lemmas \ref{lem:chan_lam_char_sum_positive} and \ref{lem:coeff_non-negative} with Lemma \ref{lem:lem_injection1} 
gives  a proof of the following result which gives Theorem \ref{thm:main_thm} as an easy consequence.
\begin{theorem} 
\label{thm:main_thm1}
Let $G_1$ and $G_2$ be two bipartite graphs with $n$ vertices such that $G_2=\GGS(G_1)$.	
Then for all $r \geq 0$ and for all $\mu \in P(n)$, we assert that 
$a_{G_2,r}^{}(\mu)\leq a_{G_1,r}^{}(\mu).$ Consequently, we have 
$b_{\lambda,r}(L_{G_2})\leq b_{\lambda,r}(L_{G_1})$  for all $\lambda \vdash n$.
\end{theorem}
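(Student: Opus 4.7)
The theorem essentially asks for two things, and both follow almost immediately from machinery already assembled in the excerpt. My plan is therefore to organize the proof as a short two-step argument, with all the heavy lifting being a citation of earlier results.

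\textbf{Step 1: The orientation inequality.} The first assertion, $a_{G_2,r}(\mu) \leq a_{G_1,r}(\mu)$, is a direct counting consequence of Lemma \ref{lem:lem_injection1}. That lemma produces an injective map $\delta : \sO_{G_2, r}(\mu) \rightarrow \sO_{G_1, r}(\mu)$ for every $r \geq 0$ and every $\mu \in P(n)$. By definition, $a_{G_i,r}(\mu) = |\sO_{G_i,r}(\mu)|$, so injectivity yields $|\sO_{G_2, r}(\mu)| \leq |\sO_{G_1, r}(\mu)|$, which is exactly what we want. There is nothing more to do here; I would state this in one sentence.

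\textbf{Step 2: Passing from orientation counts to the immanantal coefficients.} Lemma \ref{lem:coeff_non-negative} gives the combinatorial expansion
\[
b_{\lambda,r}(L_{G_i}) = \sum_{\mu \in P(n)} a_{G_i,r}(\mu)\, \alpha_{\lambda,\mu}, \qquad i=1,2,
\]
and Lemma \ref{lem:chan_lam_char_sum_positive} (the Chan--Lam non-negativity) guarantees $\alpha_{\lambda,\mu} \geq 0$ for every $\lambda, \mu \in P(n)$. Subtracting the two expansions termwise,
\[
b_{\lambda,r}(L_{G_1}) - b_{\lambda,r}(L_{G_2}) = \sum_{\mu \in P(n)} \bigl(a_{G_1,r}(\mu) - a_{G_2,r}(\mu)\bigr)\, \alpha_{\lambda,\mu},
\]
where each bracketed factor is non-negative by Step 1 and each $\alpha_{\lambda,\mu}$ is non-negative by Chan--Lam. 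Hence the difference is a sum of non-negative terms and the desired inequality $b_{\lambda,r}(L_{G_2}) \leq b_{\lambda,r}(L_{G_1})$ follows.

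\textbf{Where the obstacle really lies.} Honestly, at the level of Theorem \ref{thm:main_thm1} there is no real obstacle; the proof is a one-paragraph assembly of Lemmas \ref{lem:chan_lam_char_sum_positive}, \ref{lem:coeff_non-negative}, and \ref{lem:lem_injection1}. The substantive technical work was already absorbed into Lemma \ref{lem:lem_injection1}, where the injection $\delta$ had to be constructed by carefully treating the case $1 \in B$ with $O(1) \in Y$ (via the ``path-reversal'' trick that maps vertex $i$ on $P_{1,k}$ to vertex $k+1-i$, exploiting the bipartite structure so that no odd directed cycle appears after the swap). Beyond that, the only thing worth double-checking in the write-up is that the consequence for $b_{\lambda,r}$ is stated for \emph{all} $\lambda \vdash n$ and all $r \in \{0,1,\ldots,n\}$, which is why I prefer to display the subtracted identity above rather than simply say ``sum and conclude''.
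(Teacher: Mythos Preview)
Your proposal is correct and matches the paper's own argument essentially line for line: the paper also derives $a_{G_2,r}(\mu)\le a_{G_1,r}(\mu)$ directly from the injection of Lemma~\ref{lem:lem_injection1}, and then invokes Lemmas~\ref{lem:chan_lam_char_sum_positive} and~\ref{lem:coeff_non-negative} to conclude the coefficient inequality. The only difference is cosmetic --- you display the subtracted expansion explicitly, whereas the paper simply states that combining the three lemmas gives the result.
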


Thus, $\GGS$ operation decreases each coefficient of the Laplacian 
immanantal polynomial of a bipartite graph. 
Also taking $r=n$ in Theorem \ref{thm:main_thm1} concludes that the $\GGS$ operation on a bipartite graph decreases each  Laplacian immanant $\im_{\lambda}(L_G)$ indexed by $\lambda$. Further using Lemma \ref{lem:gts_n^c_min-max} and Theorem  \ref{thm:main_thm1}, 
we solve the following extreme value problem of finding the max-min pair in $\Omega_{C_{2k}}^v(n)$ for each  $b_{\lambda,r}(L_{G})$
\begin{corollary}
	Going up along $\GGS$ poset of $\Omega_{C_{2k}}^v(n)$ decreases 
	each coefficient of the Laplacian immanantal polynomial of a unicyclic graph in absolute value. 
	Consequently, the max-min pair for these coefficients in absolute value is   $(G_{P_{n-2k+1}},G_{S_{n-2k+1}})$  in the set  $\Omega_{C_{2k}}^v(n)$.
\end{corollary}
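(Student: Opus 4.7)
The plan is to combine the monotonicity of Theorem~\ref{thm:main_thm1} with the extremal description of Lemma~\ref{lem:gts_n^c_min-max}. The starting observation is that every member of $\Omega_{C_{2k}}^v(n)$ is a bipartite graph, because its unique cycle has even length $2k$ and any graph whose only cycle is even is bipartite. Hence Lemma~\ref{lem:coeff_non-negative} applies, so each coefficient $b_{\lambda,r}(L_G)$ with $G \in \Omega_{C_{2k}}^v(n)$ is a non-negative integer; since the $(-1)^r$ in \eqref{eqn:def_lapl_imm_poly} is the only source of sign in the immanantal polynomial, ``decrease in absolute value'' is literally the same as ``decrease'' of $b_{\lambda,r}(L_G)$.

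Next, for every covering relation $G_1 \lessdot_{\GGS_n} G_2$ in $\Omega_{C_{2k}}^v(n)$, we have $G_2 = \GGS(G_1)$ with both graphs bipartite. Theorem~\ref{thm:main_thm1} therefore gives
$$b_{\lambda,r}(L_{G_2}) \;\leq\; b_{\lambda,r}(L_{G_1}) \qquad \text{for all } \lambda \vdash n, \ 0 \leq r \leq n.$$
Iterating this inequality along any saturated chain of $\GGS_n$ on $\Omega_{C_{2k}}^v(n)$ yields the first assertion of the corollary: going up on the poset weakly decreases every coefficient $b_{\lambda,r}(L_G)$.

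For the max-min pair, I would invoke Lemma~\ref{lem:gts_n^c_min-max} with the parameter $k$ replaced by $2k$. It identifies $G_{S_{n-2k+1}}$ as the unique maximal element and $G_{P_{n-2k+1}}$ as the unique minimal element of the poset $\GGS_n$ on $\Omega_{C_{2k}}^v(n)$. Combined with the monotonicity just proved, this gives, for every $G \in \Omega_{C_{2k}}^v(n)$, every $\lambda \vdash n$, and every $0 \leq r \leq n$,
$$b_{\lambda,r}(L_{G_{S_{n-2k+1}}}) \;\leq\; b_{\lambda,r}(L_G) \;\leq\; b_{\lambda,r}(L_{G_{P_{n-2k+1}}}),$$
so the maximum is attained at $G_{P_{n-2k+1}}$ and the minimum at $G_{S_{n-2k+1}}$, which is exactly the claimed max-min pair $(G_{P_{n-2k+1}}, G_{S_{n-2k+1}})$.

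There is essentially no obstacle beyond one bookkeeping point, namely that $\GGS$ preserves the family $\Omega_{C_{2k}}^v(n)$. This is built into the construction of the $\GGS_n$-poset used in Lemma~\ref{lem:gts_n^c_min-max}: only those $\GGS$-moves that leave the even cycle $C_{2k}$ intact appear as cover relations on $\Omega_{C_{2k}}^v(n)$, so every pair $(G_1, G_2)$ along a chain consists of bipartite unicyclic graphs sharing the same cycle, and Theorem~\ref{thm:main_thm1} applies at each step of the chain. Once this structural check is noted, the corollary is a two-line consequence of results already established in the paper.
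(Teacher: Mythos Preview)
Your proof is correct and follows essentially the same route as the paper: the corollary is stated immediately after Theorem~\ref{thm:main_thm1} with the remark that it follows by combining that theorem with Lemma~\ref{lem:gts_n^c_min-max}, and your write-up simply spells out this deduction together with the observation that graphs in $\Omega_{C_{2k}}^v(n)$ are bipartite.
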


\section{Generalized matrix function indexed by symmetric functions}
\label{sec:gen_fun_symm_fun}

For a positive integer $n$, let  $\SF_{n,\QQ}$ denote the vector space of degree 
$n$ symmetric functions over $\QQ$, the set of rational numbers. Thus, 
each symmetric function  $\tau $ of  $\SF_{n,\QQ}$ can be  written as a linear 
combination of basis vectors of $\SF_{n,\QQ}$.
In the theory of algebraic combinatorics which  involves symmetric functions, 
the vector space $\SF_{n,\QQ}$ has six standard bases.
In this paper, standard terminology is used to 
denote each of the usual bases of $\SF_{n,\QQ}$.  Thus 
 for a given $\lambda \vdash n$, the
Schur,  power sum, elementary, homogeneous, monomial 
and forgotten symmetric functions are denoted by  $s_{\lambda}$, $p_{\lambda}$, $e_{\lambda}$, $h_{\lambda}$, 
$m_{\lambda}$ and $f_{\lambda}$, respectively. 
We refer the reader to the textbooks by
Stanley \cite{EC2} and  by Mendes and Remmel \cite{mendes-remmel-book} 
for background on 
symmetric functions.  It is well known that $\SF_{n,\QQ}$ has an inner product structure as well. Another inner product space often studied is $\CF_n$,
the space of class functions from $\SSS_n \mapsto \CC$.  Further, there is a well
known isometry between these two spaces called the Frobenius characteristic, denoted  
$\ch: \CF_n \rightarrow \SF_{n,\QQ}$
(see \cite{EC2} for more details).

Let $T$ be a tree on $n$ vertices with Laplacian matrix $L_T$.  
From Sections \ref{sec:intro}, \ref{sec:imm_poly} and \ref{sec:proof_main_thm}, we recall that inequalities  involving coefficients of the Laplacian 
immanantal polynomial of  $T$ are known as we go up  along $\GTS_n$ poset. 
For each $\lambda\vdash n$, it is well known that the inverse Frobenius
image $\ch^{-1}_{s_{\lambda}}$ of the Schur symmetric function $s_{\lambda}^{}$ 
is $\chi_{\lambda}^{}$,
the irreducible character of $\SSS_n$ over $\CC$ indexed by $\lambda$. 
Thus using the Frobenius characteristic map $\ch(\cdot)$, this can be thought as a result associated with the 
Schur symmetric function $s_{\lambda}^{}$ (see Nagar and Sivasubramanian   \cite{mukesh-siva-GMF} for more details). In \cite{mukesh-siva-GMF} authors 
introduced a generalized matrix function on the set of square matrices associated with 
an arbitrary symmetric function. Further  they proved that going up $\GTS_n$ 
decreases the absolute value of each coefficient of generalized 
matrix polynomial associated with 
$s_{\lambda}$, $p_{\lambda}$, $e_{\lambda}$, $h_{\lambda}$, 
$m_{\lambda}$ and $f_{\lambda}$ for each partition $\lambda$ of $n$. 

For $\tau \in \SF_{n,\QQ}$, let $\ch^{-1}_{\tau} = \ch^{-1}(\tau)$ be 
the inverse Frobenius image of $\tau$.  Clearly, $\ch^{-1}_{\tau} \in \CF_n$ is a class function on  $\SSS_n$ over $\CC$
indexed by $\tau$. From \cite{mukesh-siva-GMF} we recall the following definition of 
the generalized matrix function ($\GMF$ henceforth) of  a matrix  
$A=(a_{i,j}) \in {\mathbb M}_n(\CC)$ associated with $\tau$. It is denotes as $\GMF_{\tau}(A)$ and it is  defined by  
\begin{equation}
	\label{eqn:gen_matrix_fn}
	\GMF_{\tau}(A) = \sum_{\psi \in \SSS_n} \ch^{-1}_{\tau}(\psi) \prod_{i=1}^n a_{i,\psi(i)}.
\end{equation}

Using \eqref{eqn:gen_matrix_fn}, when $\tau=s_{\lambda}$ it is simple to see that   $\displaystyle \GMF_{\tau}(A)=\im_{\lambda}(A)$.  
Inspired by immanantal polynomial, the generalized matrix polynomial of a matrix $A$   associated with $\tau$ denoted as  $\phi_{\tau}^{}(A,x)$ is defined by $\phi_{\tau}^{}(A,x)=\GMF_{\tau}(xI-A)$. Thus when $\tau=s_{\lambda}$, we have $\GMF_{\tau}(xI-A)=\im_{\lambda}(xI-A)$,  an immanantal polynomial of $A$ indexed by $\lambda$. 
In particular, when $\lambda=1^n$ and  $\tau=s_{\lambda}$, $\GMF_{\tau}(xI-A)$ is the characteristic polynomial of $A$.

Let $G$ be a graph with $n$ vertices and let   $L_G=(\ell_{i,j})_{1\leq i,j\leq n}$ be its Laplacian matrix. 
Thus from \eqref{eqn:gen_matrix_fn}, the generalized matrix function  
$\GMF_{\tau}(L_G)$ of $L_G$ associated with $\tau \in \SF_{n,\QQ}$ is given by  

\begin{equation*}
	\label{eqn:gen_matrix_fn_sL}
	\GMF_{\tau}(L_G) = \sum_{\psi \in \SSS_n} \ch^{-1}_{\tau}(\psi) \prod_{i=1}^n \ell_{i,\psi(i)}.
\end{equation*}
and the generalized matrix polynomial of  $L_G$ indexed by $\tau $ is given  by $\phi_{\tau}^{}(L_G,x)=\GMF_{\tau}(xI-L_G)$. It is very easy to show the following
counterpart of Lemma  \ref{lem:imm_orient_bip}.  Since
the proof is a verbatim copy, we omit it.

\begin{theorem}
	\label{thm:mukesh-siva-gen_matrix_fn}
	Let $L_G$ be the Laplacian matrix of a bipartite graph $G$ with $n$ vertices. 
	Then for all $\tau \in \SF_{n,\QQ}$,  we assert that  
	$\displaystyle \GMF_{\tau}(L_G) = \sum\limits_{\mu\in P(n)} \alpha_{\mu} ^{}(\tau) a_{G}^{}(\mu)$, where 
	$\displaystyle\alpha_{\mu}^{}(\tau)=\sum\limits_{\nu \in P(n)} \ch^{-1}_{\tau}(\nu)\binom{\mu}{\nu}.
	$ 
\end{theorem}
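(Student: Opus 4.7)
The plan is to mirror the proof of Chan and Lam's Lemma \ref{lem:imm_orient_bip} essentially verbatim, with the irreducible character $\chi_\lambda$ replaced throughout by the class function $\ch^{-1}_\tau$. First I would expand the definition from \eqref{eqn:gen_matrix_fn}, and use the fact that $\ch^{-1}_\tau \in \CF_n$ is a class function, so $\ch^{-1}_\tau(\psi)$ depends only on the cycle type $\nu$ of $\psi$. Grouping permutations by cycle type yields
\[
\GMF_\tau(L_G) \;=\; \sum_{\nu \in P(n)} \ch^{-1}_\tau(\nu) \sum_{\type(\psi)=\nu} \prod_{i=1}^n \ell_{i,\psi(i)}.
\]

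Next, I would invoke the same orientation-theoretic double-count used in the proof of Lemma \ref{lem:imm_orient_bip}, which is purely graph-theoretic. Fix $\nu \in P(n)$. Bipartiteness of $G$ forces every nontrivial cycle of a contributing $\psi$ to have even length, so that $\nu = (1^{m_1}, 2^{m_2}, 4^{m_4}, \ldots)$ whenever the inner sum is nonzero. I would count pairs $(\psi, O)$ in which $\type(\psi) = \nu$ and $O \in \sO_\mu^G$ is a vertex orientation whose bidirected edges and directed $2k$-cycles extend those recorded by the non-fixed cycles of $\psi$. Counting first by $\psi$ and then attaching an outgoing arrow at each fixed point produces weight $\prod_{i=1}^n \ell_{i,\psi(i)}$ (where each fixed point $v$ contributes $\deg(v)$); counting first by $O$ of type $\mu$ and then selecting which $m_2$ bidirected edges, $m_4$ directed $4$-cycles, etc.\ are recorded in $\psi$ produces $\binom{\mu}{\nu} a_G(\mu)$ pairs. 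Equating the two gives
\[
\sum_{\type(\psi) = \nu} \prod_{i=1}^n \ell_{i,\psi(i)} \;=\; \sum_{\mu \in \sP_G(n)} \binom{\mu}{\nu} a_G(\mu),
\]
and this step does not involve $\tau$ at all.

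Finally, I would substitute this back, extend the inner sum from $\sP_G(n)$ to all of $P(n)$ (which is harmless since $a_G(\mu) = 0$ for $\mu \notin \sP_G(n)$), and exchange the order of summation to obtain
\[
\GMF_\tau(L_G) \;=\; \sum_{\mu \in P(n)} a_G(\mu) \sum_{\nu \in P(n)} \ch^{-1}_\tau(\nu)\binom{\mu}{\nu} \;=\; \sum_{\mu \in P(n)} \alpha_\mu^{}(\tau)\, a_G(\mu),
\]
which is the desired identity.

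Because $\tau$ enters the argument only through the coefficient $\ch^{-1}_\tau(\nu)$, and because all other steps (bipartiteness, the double count, and the exchange of summations) are independent of $\tau$, I do not expect any genuine obstacle: the argument is literally a word-for-word copy of the Chan–Lam proof with $\chi_\lambda$ replaced by $\ch^{-1}_\tau$. The one point worth explicitly verifying is that the only property of $\chi_\lambda$ actually exploited in their derivation is its being a class function on $\SSS_n$, which indeed is all $\ch^{-1}_\tau$ provides. This is precisely why the authors flag the proof as a ``verbatim copy'' and omit it.
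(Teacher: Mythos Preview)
Your proposal is correct and matches the paper's approach exactly: the authors explicitly state that the proof is a ``verbatim copy'' of the Chan--Lam argument for Lemma~\ref{lem:imm_orient_bip} and omit it, and your outline carries out precisely that substitution of $\ch^{-1}_\tau$ for $\chi_\lambda$ while noting that only the class-function property is used.
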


Since $\chi_{\lambda}^{}=\ch^{-1}_{s_{\lambda}}$,
 we have  $\displaystyle \alpha_{\mu}^{}(\tau)=\alpha_{\lambda,\mu}^{}$ and hence $\displaystyle \GMF_{\tau}(L_G)=\im_{\lambda}(L_G)$. 
It is very simple to show the following
counterpart of Lemma \ref{lem:coeff_non-negative}.  Since
the proof is identical, we omit it and merely state the result.

\begin{lemma}
	\label{lem:coeff_gmf_poly}
	Let $G$ be a bipartite graph on $n$ vertices with Laplacian matrix $L_G$. 
	Then, for all $\tau \in \SF_{n,\QQ}$, we have 
	$\displaystyle \GMF_{\tau}(xI-L_G)=\sum\limits_{r=0}^n (-1)^r 
	b_{\tau,r}^{}(L_{G}) x^{n-r}$, where 
	$\displaystyle b_{\tau,r}^{}(L_{G})
	=\sum\limits_{\mu\in P(n)}\alpha_{\mu}^{}(\tau)a_{G,r}^{}(\mu)$ and $r\in\{0,1,\ldots,n\}$.
\end{lemma}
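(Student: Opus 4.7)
The plan is to imitate the proof of Lemma \ref{lem:coeff_non-negative} verbatim, replacing the irreducible character $\chi_{\lambda}^{}$ everywhere by the inverse Frobenius image $\ch^{-1}_{\tau}$. The crucial observation that legitimizes this substitution is that $\ch^{-1}_{\tau} \in \CF_n$ is a class function on $\SSS_n$, so $\ch^{-1}_{\tau}(\psi)$ depends only on the cycle type of $\psi$. This is the only property of $\chi_{\lambda}^{}$ that was genuinely used in the earlier argument; irreducibility (or even being a character) never enters the combinatorial bookkeeping.

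Concretely, for each $B \subseteq [n]$ with $|B| = r$, I would first set
\[
b_{\tau,B}^{}(L_G) = \GMF_{\tau}\!\left[\begin{matrix} L_G[B|B] & 0 \\ 0 & I \end{matrix}\right],
\]
and then expand $\GMF_{\tau}(xI - L_G)$ using \eqref{eqn:gen_matrix_fn} to write the coefficient of $(-1)^r x^{n-r}$ as $b_{\tau,r}^{}(L_G) = \sum_{|B|=r} b_{\tau,B}^{}(L_G)$; this reduction is purely formal and uses only the multilinearity of $\prod_i (xI - L_G)_{i,\psi(i)}$ in the diagonal entries. From the definition of $\GMF_{\tau}$ and the fact that $\ch^{-1}_{\tau}$ is a class function, I would then group permutations by cycle type $\nu$ to obtain
\[
b_{\tau,B}^{}(L_G) = \sum_{\nu \in P(n)} \ch^{-1}_{\tau}(\nu) \sum_{\type(\psi)=\nu,\ \psi \text{ fixes } [n]-B} \prod_{i\in B} \ell_{i,\psi(i)}.
\]

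Next, I would invoke the bipartite counting identity already established in the proof of Lemma \ref{lem:coeff_non-negative}. Because $G$ is bipartite, the induced subgraph $G_B$ has no odd cycles, so only partitions of the form $(1^{n_1},2^{n_2},4^{n_4},\ldots)$ arise, and the inner sum admits the $B$-orientation interpretation
\[
\sum_{\type(\psi)=\nu,\ \psi\text{ fixes }[n]-B} \prod_{i\in B}\ell_{i,\psi(i)} = \sum_{\mu \in \sP_{G,B}(n)} \binom{\mu}{\nu} a_{G,B}^{}(\mu).
\]
Substituting this into the expression for $b_{\tau,B}^{}(L_G)$, summing over all $B$ with $|B|=r$, swapping the order of summation over $\mu$ and $\nu$, and using the definition $\alpha_{\mu}^{}(\tau) = \sum_{\nu} \ch^{-1}_{\tau}(\nu) \binom{\mu}{\nu}$ from Theorem \ref{thm:mukesh-siva-gen_matrix_fn} yields exactly $b_{\tau,r}^{}(L_G) = \sum_{\mu \in P(n)} \alpha_{\mu}^{}(\tau)\, a_{G,r}^{}(\mu)$, as required.

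There is no real obstacle here: the combinatorial core of the argument (the $B$-orientation count) is independent of which class function sits in front of $\prod_i \ell_{i,\psi(i)}$. The only place where the earlier proof used positivity of $\chi_{\lambda}^{}$ was in deducing that $b_{\lambda,r}^{}(L_G)$ is a non-negative integer via Lemma \ref{lem:chan_lam_char_sum_positive}; since the current statement does not assert non-negativity of $b_{\tau,r}^{}(L_G)$ but only expresses it in terms of $\alpha_{\mu}^{}(\tau)$ and $a_{G,r}^{}(\mu)$, even that auxiliary step is unnecessary. Hence the proof is indeed a direct transcription of the one for Lemma \ref{lem:coeff_non-negative}, which is why the authors elect to omit it.
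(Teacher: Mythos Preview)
Your proposal is correct and matches the paper's own treatment exactly: the paper explicitly states that the proof is identical to that of Lemma \ref{lem:coeff_non-negative} and omits it, and your explanation of why the substitution $\chi_{\lambda}^{} \mapsto \ch^{-1}_{\tau}$ is legitimate (only the class-function property is used, and non-negativity is not asserted here) is precisely the reason the transcription goes through.
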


As mentioned earlier, the inverse Frobenius
image $\ch^{-1}_{s_{\lambda}}$ of the Schur symmetric function $s_{\lambda}$ 
is $\chi_{\lambda}^{}$,
the irreducible character of $\SSS_n$ over $\CC$ indexed by $\lambda$.
Thus, if any symmetric function $\tau \in \SF_{n,\QQ}$ is Schur-positive
(that is, $\tau = \sum_{\lambda \vdash n} c_{\lambda} s_{\lambda}$ where
$c_{\lambda} \in \RR^{+}$ for all $\lambda \vdash n$), 
then, by linearity, Lemma \ref{lem:chan_lam_char_sum_positive}  
will be true with $\chi_{\lambda}^{}$ replaced by $\ch^{-1}(\tau)$ in the definition of $\alpha_{\lambda,\mu}^{}$ in \eqref{eqn:def_alpha}.   
Since $h_{\lambda}$ and $e_{\lambda}$
are Schur-positive (see \cite[Corollary 7.12.4]{EC2}), it follows from
Theorem \ref{thm:mukesh-siva-gen_matrix_fn} that 
$\alpha_{\mu}^{}(h_{\lambda}) \geq 0$ and $\alpha_{\mu}^{}(e_{\lambda}) \geq 0$ for all $\lambda, \mu \vdash n$.  
%
Since the inverse Frobenius image $\ch^{-1}_{p_{\lambda}}$ of
$p_{\lambda}$ is a scalar multiple of the indicator function of 
the conjugacy class $C_{\lambda}$ indexed by $\lambda$ (see \cite{EC2}), it follows
that $\alpha_{\mu}^{}(p_{\lambda}) \geq 0$ for all $\lambda,\mu \vdash n$.
We record these well known facts below for future use. 

\begin{lemma}
	\label{lem:non_neg_alpha_tau}
For $\lambda, \mu \in P(n)$ and $\tau \in \{e_{\lambda},p_{\lambda},h_{\lambda} \}$, 
let	$\alpha_{\mu}^{}(\tau)$ be as used  in Theorem \ref{thm:mukesh-siva-gen_matrix_fn}.  
 Then $\alpha_{\mu}^{}(\tau)$ is a non-negative integer.
\end{lemma}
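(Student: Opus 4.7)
The plan is to split the lemma across the three symmetric-function families and reduce the Schur-positive cases $\tau \in \{e_\lambda, h_\lambda\}$ to Lemma \ref{lem:chan_lam_char_sum_positive}, while handling $\tau = p_\lambda$ by a direct evaluation of $\ch^{-1}(p_\lambda)$. The formula for $\alpha_\mu(\tau)$ in Theorem \ref{thm:mukesh-siva-gen_matrix_fn} is linear in $\tau$, so the entire argument boils down to decomposing each $\tau$ into a favourable basis of $\SF_{n,\QQ}$.

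For $\tau \in \{h_\lambda, e_\lambda\}$, I would invoke the classical Schur expansions
\[
h_\lambda = \sum_{\nu \vdash n} K_{\nu,\lambda}\, s_\nu \qquad \text{and} \qquad e_\lambda = \sum_{\nu \vdash n} K_{\nu',\lambda}\, s_\nu,
\]
in which the Kostka numbers $K_{\nu,\lambda}$ are non-negative integers (see Stanley \cite[Corollary 7.12.4]{EC2}). Applying $\ch^{-1}$ termwise and using $\ch^{-1}(s_\nu) = \chi_\nu^{}$ gives, by linearity of $\alpha_\mu(\cdot)$,
\[
\alpha_\mu(h_\lambda) = \sum_{\nu \vdash n} K_{\nu,\lambda}\, \alpha_{\nu,\mu}, \qquad \alpha_\mu(e_\lambda) = \sum_{\nu \vdash n} K_{\nu',\lambda}\, \alpha_{\nu,\mu}.
\]
Lemma \ref{lem:chan_lam_char_sum_positive} ensures each $\alpha_{\nu,\mu}$ is a non-negative integer, and a non-negative integer combination of non-negative integers is again a non-negative integer.

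For $\tau = p_\lambda$, I would use the standard Frobenius-characteristic identity $\ch^{-1}(p_\lambda) = z_\lambda \cdot \bone_{C_\lambda}$, where $C_\lambda \subseteq \SSS_n$ is the conjugacy class of cycle type $\lambda$ and, writing $\lambda = 1^{a_1} 2^{a_2} \cdots n^{a_n}$, the integer $z_\lambda = \prod_{i \geq 1} i^{a_i} a_i!$ is the order of the centralizer of any element of $C_\lambda$. Substituting this class function into the defining sum for $\alpha_\mu(\tau)$ collapses everything but the $\nu=\lambda$ term, yielding
\[
\alpha_\mu(p_\lambda) \;=\; z_\lambda \binom{\mu}{\lambda},
\]
a product of two non-negative integers.

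I do not anticipate any real obstacle; the lemma is essentially a bookkeeping corollary of the Frobenius dictionary together with Lemma \ref{lem:chan_lam_char_sum_positive}. The only point worth being careful about is integrality (not merely non-negativity): for $h_\lambda$ and $e_\lambda$ this relies on the Kostka numbers being integers, and for $p_\lambda$ on using the integer $z_\lambda$ rather than the rational $1/z_\lambda$ that appears in the more commonly cited dual-basis expansion $s_\nu = \sum_\lambda z_\lambda^{-1} \chi_\nu^{}(\lambda)\, p_\lambda$.
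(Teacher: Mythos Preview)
Your proposal is correct and follows essentially the same route as the paper: for $h_\lambda$ and $e_\lambda$ you use Schur-positivity (via the Kostka expansion) together with Lemma \ref{lem:chan_lam_char_sum_positive}, and for $p_\lambda$ you use that $\ch^{-1}(p_\lambda)$ is a non-negative scalar multiple of the indicator of $C_\lambda$. The only difference is that you are more explicit than the paper about the constants involved (Kostka numbers, $z_\lambda$) and hence about the integrality claim, which the paper's discussion glosses over.
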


For all $\lambda,\mu \vdash n,$ it is very simple to check that the above lemma is not true when $\tau=m_{\lambda}^{}$, the monomial symmetric function associated with $\lambda$. When $n=4$ the values of the quantity $\alpha_{\mu}^{}(m_{\lambda})$ are tabulated in Table \ref{tab:monomial}. When  $\mu=4$ from Table \ref{tab:monomial}, one can see that   $\alpha_{\mu}^{}(m_{1^4}^{})=-1$, $\alpha_{\mu}^{}(m_{2^2}^{})=-2$  and $\alpha_{\mu}^{}(m_{3,1}^{})=-4$. 
Although when $\mu=2^j,1^{n-2j}$, Nagar and Sivasubramanian 
\cite{mukesh-siva-GMF} proved that $\alpha_{\mu}^{}(m_{\lambda})$ is a 
non-negative integral multiple of $2^j$ for each $\lambda$ and for all $j\geq 0$.

	
	\begin{table}[ht]
	\begin{center}
			$$\begin{array}{|l | c | c|c |c|c|}
				\hline
					&   \mu=1^4 & \mu=2,1^2 & \mu=2^2 &\mu=3,1& \mu=4\\ 
					\hline  
			\lambda=1^4 &		1 &  0  & 0  & 0 & -1 \\ 
				\hline
			\lambda=2,1^2&		 0  & 0  & 0 &  0  & \ \ \ 4 \\
				 \hline
			\lambda=2^2 &	 0 &  0  & 0  &  0 &  -2 \\
				 \hline
			\lambda=3,1&	 0  &  0 &   0  &  0  & -4  \\
				 \hline
		\lambda=4 &	0  &  0 &  0  &  0  &  \ \ \ 4 \\
			\hline
				\end{array} $$
			\end{center}
			\caption{The values of $\alpha_{\mu}^{}(m_{\lambda})$, where $\lambda,\mu \vdash 4.$}
				\label{tab:monomial}
	\end{table}


Here, we extend Theorem \ref{thm:main_thm} to the 
generalized matrix polynomials of the Laplacian matrix $L_G$ of a bipartite graph $G$ associated with three standard 
bases involving $e_{\lambda}$, $h_{\lambda}$ and $p_{\lambda}$ for all $\lambda \vdash n$. In other words, 
we consider the cases when we replace the Laplacian 
immanantal polynomial $\im_{\lambda}(xI - L_{G})$ of a bipartite graph $G$ by 
$\GMF_{\tau}(xI - L_{G})$ in Theorem \ref{thm:main_thm}, where  
$\tau \in \{e_{\lambda}, h_{\lambda},p_{\lambda} \}$.  
Thus using the $\GGS$ operation on a bipartite graph, we have the following consequence of  Lemmas \ref{lem:lem_injection1}, \ref{lem:coeff_gmf_poly} and  \ref{lem:non_neg_alpha_tau} and Theorem \ref{thm:main_thm}.

\begin{theorem}
	\label{thm:gen_poly_main_thm}
	For $0 \leq r \leq n$ and for all $\lambda \vdash n$,  
	the $\GGS$ operation on a bipartite graph $G$ decreases the coefficient  $b_{\tau,r}^{}(L_{G})$  of 
	$(-1)^r x^{n-r}$ in $\GMF_{\tau}(xI-L_G)$, where $\tau \in \{s_{\lambda},e_{\lambda},p_{\lambda},h_{\lambda} \}$.  
	Moreover for this monotonicity result on $\GGS$ poset of $\Omega_{C_{2k}}^v(n)$, 
	the max-min pair of unicyclic graphs in  $\Omega_{C_{2k}}^v(n)$ is $(G_{P_{n-2k+1}},G_{S_{n-2k+1}})$.
\end{theorem}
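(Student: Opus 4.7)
The plan is to piece together the three ingredients already established in the preceding section: the combinatorial formula for $b_{\tau,r}(L_G)$ in terms of $B$-orientation counts, the non-negativity of the scalars $\alpha_{\mu}(\tau)$ for the relevant bases, and the injection from $\sO_{G_2,r}(\mu)$ to $\sO_{G_1,r}(\mu)$ whenever $G_2 = \GGS(G_1)$.

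First I would invoke Lemma \ref{lem:coeff_gmf_poly} to rewrite
\[
b_{\tau,r}(L_G) \;=\; \sum_{\mu \in P(n)} \alpha_{\mu}(\tau)\, a_{G,r}(\mu),
\]
valid for any $\tau \in \SF_{n,\QQ}$ and any bipartite $G$ on $n$ vertices. Next I would record that for $\tau \in \{s_\lambda, e_\lambda, h_\lambda, p_\lambda\}$ the coefficients $\alpha_{\mu}(\tau)$ are non-negative integers. For $\tau = s_\lambda$ this is exactly Lemma \ref{lem:chan_lam_char_sum_positive} (since $\ch^{-1}_{s_\lambda} = \chi_\lambda$ and $\alpha_\mu(s_\lambda) = \alpha_{\lambda,\mu}$), while for $\tau \in \{e_\lambda, h_\lambda, p_\lambda\}$ this is Lemma \ref{lem:non_neg_alpha_tau}. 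The key point is that $h_\lambda$ and $e_\lambda$ are Schur-positive and $\ch^{-1}_{p_\lambda}$ is a non-negative scalar multiple of an indicator function, so the non-negativity of $\alpha_\mu(\cdot)$ propagates by linearity from the Schur case.

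With $\alpha_{\mu}(\tau) \geq 0$ in hand, the $\GGS$ monotonicity reduces to showing $a_{G_2,r}(\mu) \leq a_{G_1,r}(\mu)$ for every $\mu \in P(n)$ and every $r$. But this is precisely the content of the injection $\delta \colon \sO_{G_2,r}(\mu) \to \sO_{G_1,r}(\mu)$ built in Lemma \ref{lem:lem_injection1} (equivalently, Theorem \ref{thm:main_thm1}). Combining the two facts term by term yields
\[
b_{\tau,r}(L_{G_2}) \;=\; \sum_\mu \alpha_\mu(\tau)\, a_{G_2,r}(\mu) \;\leq\; \sum_\mu \alpha_\mu(\tau)\, a_{G_1,r}(\mu) \;=\; b_{\tau,r}(L_{G_1}),
\]
which is the monotonicity statement of the theorem. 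I would note that bipartiteness is crucial here: it restricts the types $\mu$ arising in $\sP_{G,B}(n)$ to those of shape $(1^{n_1},2^{n_2},4^{n_4},\ldots)$, ensuring the orientation framework of Section \ref{sec:imm_poly} applies and that $\GGS(G)$ remains bipartite (one needs the cycle restriction in the definition of $\Omega_{C_{2k}}^v(n)$, which is why the second assertion uses cycles of even length $2k$).

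For the max-min statement, I would appeal to Lemma \ref{lem:gts_n^c_min-max}: inside the poset $\GGS_n$ on $\Omega_{C_{2k}}^v(n)$, the graphs $G_{S_{n-2k+1}}$ and $G_{P_{n-2k+1}}$ are respectively the unique maximal and minimal elements. Since every $G \in \Omega_{C_{2k}}^v(n)$ is bipartite (the only cycle has even length $2k$) and since every chain from $G_{P_{n-2k+1}}$ up to $G_{S_{n-2k+1}}$ in the poset is a sequence of $\GGS$ covers, the first part of the theorem gives
\[
b_{\tau,r}(L_{G_{S_{n-2k+1}}}) \;\leq\; b_{\tau,r}(L_{G}) \;\leq\; b_{\tau,r}(L_{G_{P_{n-2k+1}}})
\]
for every $G \in \Omega_{C_{2k}}^v(n)$, identifying the max-min pair as $(G_{P_{n-2k+1}},G_{S_{n-2k+1}})$. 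The only substantive obstacle anywhere in the argument is the non-negativity of $\alpha_\mu(\tau)$ across the four bases, and this has been isolated into Lemma \ref{lem:non_neg_alpha_tau}; given that, the proof is a clean bookkeeping assembly of results already proved.
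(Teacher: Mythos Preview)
Your proposal is correct and follows exactly the paper's approach: the paper states the theorem as an immediate consequence of Lemmas \ref{lem:lem_injection1}, \ref{lem:coeff_gmf_poly}, \ref{lem:non_neg_alpha_tau} and Theorem \ref{thm:main_thm}, together with Lemma \ref{lem:gts_n^c_min-max} for the max-min pair, which is precisely the assembly you carry out. If anything, you supply more detail than the paper does.
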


Thus the above result solves an extreme value problem involving coefficients of the generalized matrix polynomial of the Laplacian of a bipartite graph associated with the Schur, elementary, power sum and homogeneous  symmetric functions.

\section{Corollaries}
\label{sec:additional_results}
For the convenience of the reader, from \cite{csikvari-poset1,csikvari-poset2}, 
we repeat some proofs which go through for an arbitrary graph under $\GGS$ operation 
thus making our exposition self contained.  The important point to note here is a natural extension of  monotonicity results on $\GTS_n$  to all connected graphs under $\GGS$ operation and it is used to solve extreme value problems of finding the max-min pair in the set $\Omega_{C_{k}}^v(n)$ for some graph-theoretical parameters which are monotonic under this transformation. 
Let $G$ be a graph with adjacency matrix $A(G)$ and spectral radius $\sigma(G)$ (the largest eigenvalue of $A(G)$).  
 Since the proof of the following theorem is identical to the  result given in 
  \cite[Theorem 3.1]{csikvari-poset2}, 
  we give an outline of the proof. 
\begin{theorem}
	\label{thm:spectral_radius}
	The $\GGS$ operation on an arbitrary graph increases the spectral radius.
\end{theorem}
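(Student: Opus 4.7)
The plan is to adapt Csikv\'ari's argument for the $\GTS$ case via Perron--Frobenius and the Rayleigh quotient characterization of $\sigma(\cdot)$. I would start by letting $\mathbf{f}=(f(1),\ldots,f(n))^T$ be a non-negative Perron eigenvector of $A(G_1)$ corresponding to the eigenvalue $\sigma(G_1)$, normalized so $\|\mathbf{f}\|_2 = 1$. Then $\sigma(G_1) = \mathbf{f}^T A(G_1) \mathbf{f}$, and by the variational characterization $\sigma(G_2) \geq \mathbf{v}^T A(G_2) \mathbf{v}$ for every unit vector $\mathbf{v}$ on $V(G_2) = V(G_1)$. The strategy is to use $\mathbf{f}$ itself as a test vector on $G_2$.

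Next, I would invoke the remark following Definition \ref{def:egts} that exchanging the roles of the donor $k$ and the recipient $1$ produces a graph isomorphic to $G_2$; so I may assume without loss of generality that $f(1)\geq f(k)$. I then need to compare the adjacency matrices. Conditions (1) and (2) of Definition \ref{def:egts} together imply that $N_{G_1}(1)\cap N_{G_1}(k)=\emptyset$: any common neighbor $w$ lying off the path $P_{1,k}$ would, together with $P_{1,k}$, form a cycle containing both $1$ and $k$, violating (2); meanwhile (1) forces any interior vertex of $P_{1,k}$ to have degree $2$, precluding it from being a common neighbor. Consequently, the $\GGS$ move simply relocates each edge $(k,y)$ with $y\in N_{G_1}(k)\setminus\{k-1\}$ to the edge $(1,y)$ without creating multiple edges, and all other edges are unchanged.

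The key computation is then a telescoping of the quadratic form:
\[
\mathbf{f}^T A(G_2) \mathbf{f} - \mathbf{f}^T A(G_1) \mathbf{f} = 2 \sum_{y \in N_{G_1}(k)\setminus\{k-1\}} f(y)\big(f(1)-f(k)\big) \geq 0,
\]
where non-negativity follows because $\mathbf{f}\geq \mathbf{0}$ and $f(1)\geq f(k)$. Combining with $\sigma(G_2) \geq \mathbf{f}^T A(G_2) \mathbf{f}$ yields $\sigma(G_2) \geq \sigma(G_1)$, as desired. The main obstacle is the bookkeeping around the WLOG assumption $f(1)\geq f(k)$ and the verification that no duplicate edges are created by the $\GGS$ move; both are handled by the symmetry remark and the two conditions defining $\GGS$. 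If strict inequality were wanted, one would additionally use that $\mathbf{f}>\mathbf{0}$ when $G_1$ is connected (by Perron--Frobenius) together with the non-triviality of the move (i.e.\ $N_{G_1}(k)\setminus\{k-1\}\neq\emptyset$ and $f(1)>f(k)$, the latter typically being forced by the spectral identity $\sigma(G_1)f(v) = \sum_{u\sim v} f(u)$).
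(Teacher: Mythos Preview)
Your proof is correct and follows essentially the same route as the paper's: take the Perron eigenvector $w$ of $A(G_1)$, use the donor/recipient symmetry to assume $w_1\ge w_k$, compute $w^TA(G_2)w-w^TA(G_1)w=2(w_1-w_k)\sum_{y\in N_{G_1}(k)\setminus\{k-1\}}w_y\ge 0$, and finish with the Rayleigh quotient bound. Your extra care about the absence of multi-edges (and the remarks on strictness) goes slightly beyond what the paper writes, though note your intermediate claim $N_{G_1}(1)\cap N_{G_1}(k)=\emptyset$ fails when $P_{1,k}$ has length~$2$ (the lone interior vertex is then a common neighbour); what is actually needed---and what does hold---is that $\big(N_{G_1}(k)\setminus\{k-1\}\big)\cap N_{G_1}[1]=\emptyset$.
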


\begin{proof}
	Let $G_1$ and $G_2$ be two arbitrary graphs with $n$ vertices such that $G_2=\GGS(G_1)$. 
	Let $1$ and $k$  be the recipient and the donor in $G_1$, respectively. 
	Let $\sigma(G_1)$ and $\sigma(G_2)$ be spectral radii of $G_1$ and $G_2$, respectively. Let $w=(w_1,w_2,\ldots,w_n)^t$ be the Perron eigenvector of $A(G_1)$ associated to $\sigma(G_1)$ with $||w||=1$. 
	Recall that if  the role of $1$ and $k$ in $G_1$ are exchanged then the resulting graph after using $\GGS$ transformation is isomorphic to $G_2$. Thus without loss of generality, we assume that  
	$w_1\geq w_k.$ Let $Z_{G_1}(k)=\{v\in G_1:v\in N(k) \mbox{ with } v\neq k-1\}$, 
	where $N(k)$ is the set of neighbours of $k$ which doesn't contain $k$.  Then it is easy to check that 
	\begin{align*}
	\sigma(G_1) & =w^tA(G_1)w= w^tA(G_2)w-2(w_1-w_k)\sum_{j\in Z_{G_1}(k)} w_j\\
	& \leq w^tA(G_2)w \leq \max_{||z||=1} z^tA(G_2)z = \sigma(G_2).
	\end{align*}
	Hence $\sigma(G_1)\leq \sigma(G_2)$ completes the proof.
\end{proof}

\vspace{2mm}
The main point to note about the above theorem is that it is also true when  $G_2=\GGS(G_1)$ and all edges of $G_1$ as well as of $G_2$ are  replaced by blocks (complete graphs). 
%

Let $G$ be a connected graph with $n$ vertices. The Wiener Index of $G$, denoted $\wi(G)$  
is defined by $\wi(G)=\sum_{i,j\in G}d_G(i,j)$, where $d_G(i,j)$ is the shortest 
distance between the vertices $i$ and $j$ in $G$. Since the proof of the following theorem is similar to the proof given in \cite[Theorem 3.1]{csikvari-poset1} we sketch our proof for the sake of completeness.
\begin{theorem}
	\label{thm:wiener_index}
Let $G_1$ and $G_2$ be two connected graphs with $n$ vertices such that 
$G_2=\GGS(G_1)$. Then $\wi(G_1)\geq \wi(G_2)$. 
\end{theorem}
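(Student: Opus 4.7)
My plan is a direct pairwise distance accounting on the common vertex set of $G_1$ and $G_2$. Let $\ell$ denote the length of the path $P_{1,k}$ that witnesses the $\GGS$ operation, let $Z = N_{G_1}(k) \setminus \{k-1\}$ be the neighbours of $k$ that get moved to $1$, and let $X$ (resp.\ $Y$) be the vertices reachable from $1$ (resp.\ $k$) in $G_1$ without using an edge of $P_{1,k}$. I partition $V(G_1) = V(G_2)$ into four blocks: $A = X \cup \{1\}$, $B$ consisting of the $\ell - 1$ interior vertices of $P_{1,k}$, $C = \{k\}$, and $D = Y$. The induced subgraphs on $A$, on $B \cup C$, and on $D$, together with the edges along $P_{1,k}$, are preserved by $\GGS$; only the edges joining $D$ to its ``hub'' change, from $k$ in $G_1$ to $1$ in $G_2$.

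First I would show that most block-pair contributions to $\wi(G_1) - \wi(G_2)$ vanish. For pairs with both endpoints in $A \cup B \cup C$ the shortest paths use only unchanged edges, so these contributions are zero. The key nontrivial observation is that pairs inside $D$ also contribute zero: any $u$-$v$ walk with $u,v \in D$ either stays inside the induced subgraph $G[D]$, whose edge set is unchanged, or leaves and returns through a single hub; since every vertex of $Z$ is adjacent to the hub in its respective graph, the minimum detour length equals $d_{G[D]}(u,Z) + 2 + d_{G[D]}(v,Z)$ in both $G_1$ (hub $= k$) and $G_2$ (hub $= 1$). The main (minor) obstacle is formalising this equality: one must verify that no walk wandering in and out of $D$ multiple times beats this detour, and that each vertex of $D$ reaches some $z \in Z$ inside $G[D]$, which follows from the connectedness of $G_1$ together with the fact that $D$ is joined to the rest of $G_1$ only through edges incident to $k$.

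The surviving contributions come from pairs with one endpoint in $D$ and the other in $A \cup B \cup C$. For $u \in D$ and $w \in A$ the walk in $G_1$ must traverse the whole path while in $G_2$ it shortcuts through vertex $1$, giving $d_{G_1}(u,w) - d_{G_2}(u,w) = \ell$, and a total contribution of $\ell\, |A|\, |D|$. For $u \in D$ and the vertex $k$ the roles are reversed: $d_{G_1}(u,k) - d_{G_2}(u,k) = -\ell$, contributing $-\ell\, |D|$. For $u \in D$ and the $i$-th interior vertex of $P_{1,k}$ (with $1 \leq i \leq \ell - 1$) the difference equals $\ell - 2i$, and $\sum_{i=1}^{\ell-1}(\ell - 2i) = 0$. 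Adding everything up yields $\wi(G_1) - \wi(G_2) = \ell\, |D|\,(|A| - 1) \geq 0$, since $1 \in A$ forces $|A| \geq 1$; the borderline case $|A| = 1$ (i.e.\ $X = \emptyset$) is exactly the situation in which $G_1$ and $G_2$ are isomorphic, consistent with equality.
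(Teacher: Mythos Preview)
Your argument is correct and follows essentially the same block-by-block distance comparison as the paper: the paper pairs path vertex $i$ with $k+1-i$ to show the path-to-$Y$ contributions cancel (equivalent to your $\sum_{i=1}^{\ell-1}(\ell-2i)=0$) and arrives at the same surplus $(k-1)\,|X|\,|Y|=\ell\,|D|\,(|A|-1)$. Your justification that $D$--$D$ distances are preserved is in fact more careful than the paper's, which simply declares $d_{G_1}(y_1,y_2)=d_{G_2}(y_1,y_2)$ ``obvious''; the only slip is that your literal definition of $X,Y$ (vertices reachable from $1$, resp.\ $k$, without using an edge of $P_{1,k}$) includes $1$ and $k$ themselves, which you clearly do not intend given your partition and the remark that $|A|=1\iff X=\emptyset$.
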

\begin{proof}
Let $d_{G_1}(i,j)$ be the shortest distance between $i$ and $j$ in $G_1$ and similarly $d_{G_2}(u,v)$ is defined for $u,v\in G_2$.  Let $X$ and $Y$ be two subgraphs of $G_1$ as given in Figure  \ref{fig:injection_gts_n^c_example}.
Thus using Figure  \ref{fig:injection_gts_n^c_example}, it is easy to see that 
$$ d_{G_1}(i,x)+d_{G_1}(k+1-i,x)= d_{G_2}(i,x)+d_{G_2}(k+1-i,x) \mbox{ for all } x \in X$$

and 
$$ d_{G_1}(i,y)+d_{G_1}(k+1-i,y)= d_{G_2}(i,y)+d_{G_2}(k+1-i,y) \mbox{ for all } y \in Y. $$

Obviously $ d_{G_1}(x_1,x_2)= d_{G_2}(x_1,x_2)$ for all  $x_1,x_2 \in X$ and  
$ d_{G_1}(y_1,y_2)= d_{G_2}(y_1,y_2)$ for all $ y_1,y_2 \in Y$. It is also easy to check that 
$d_{G_1}(x,y)= d_{G_2}(x,y)+(k-1) \mbox{ for all } x\in X \mbox{ and } y \in Y$ and hence $$\sum_{x\in X, y\in Y}d_{G_1}(x,y)=\sum_{x\in X , y\in Y} d_{G_2}(x,y)+(k-1)|X||Y|.$$

After combining all the above identities, we assert that  
  $ \sum_{i,j\in G_1}d_{G_1}(i,j) \geq \sum_{u,v\in G_2}d_{G_2}(u,v)$. 
  Thus $\wi(G_1)\geq \wi(G_2)$ completes the proof. 
	\end{proof}

\vspace{2mm}
Thus each monotonic result under  $\GGS$ operation is a natural extension of $\GTS$ transformation. 
 Using the definition of $\GGS$ poset on the set  $\Omega_{C_k}^v(n)$, the following result is an easy consequence of  Lemma  \ref{lem:gts_n^c_min-max} and Theorems \ref{thm:spectral_radius} and 
 \ref{thm:wiener_index}.
 
\begin{corollary}
	\label{cor:min_max_pairs}
Let $G_{S}= G_{S_{n-k+1}}$ and   $G_{P}= G_{P_{n-k+1}}$ be the maximal and the minimal elements in  $\Omega_{C_k}^v(n)$, respectively. Then 
from Theorems \ref{thm:spectral_radius} and \ref{thm:wiener_index} with Lemma \ref{lem:gts_n^c_min-max}, $(G_S,G_P)\in \Omega_{C_k}^v(n)$ is the max-min pair for the spectral radius while $(G_P,G_S)\in \Omega_{C_k}^v(n)$ is the max-min pair for the Wiener index.
\end{corollary}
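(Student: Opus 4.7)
The plan is to assemble the corollary directly from the three cited ingredients: Lemma \ref{lem:gts_n^c_min-max} fixes the extreme elements of the poset $(\Omega_{C_k}^v(n), \leq_{\GGS_n})$, while Theorems \ref{thm:spectral_radius} and \ref{thm:wiener_index} translate each cover relation of that poset into a monotonicity statement for the spectral radius and the Wiener index respectively. Once we observe that every $G \in \Omega_{C_k}^v(n)$ sits on a chain joining $G_P$ and $G_S$, the max-min pairs fall out immediately.

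First, I would invoke Lemma \ref{lem:gts_n^c_min-max} to record that $G_S$ is the unique maximal element and $G_P$ the unique minimal element of $(\Omega_{C_k}^v(n), \leq_{\GGS_n})$. By the definition of $\leq_{\GGS_n}$ as the reflexive-transitive closure of single $\GGS$ moves, for each $G \in \Omega_{C_k}^v(n)$ there exist saturated chains
$$G_P = H_0 <_{\GGS_n} H_1 <_{\GGS_n} \cdots <_{\GGS_n} H_s = G = K_0 <_{\GGS_n} K_1 <_{\GGS_n} \cdots <_{\GGS_n} K_t = G_S,$$
each cover $H_{i-1} <_{\GGS_n} H_i$ and $K_{j-1} <_{\GGS_n} K_j$ being witnessed by a single $\GGS$ move. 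Termination of the upward chain at $G_S$ is guaranteed because $\GGS$ never decreases the number of leaves, as recorded after Definition \ref{def:egts}.

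For the spectral radius, I would apply Theorem \ref{thm:spectral_radius} across every cover relation in the two chains above, obtaining
$$\sigma(G_P) \leq \sigma(G) \leq \sigma(G_S) \qquad \text{for every } G \in \Omega_{C_k}^v(n).$$
Hence the max-min pair for $\sigma$ in $\Omega_{C_k}^v(n)$ is $(G_S, G_P)$. For the Wiener index, the identical chain argument combined with Theorem \ref{thm:wiener_index}, which says that $\GGS$ weakly decreases $\wi(\cdot)$, yields
$$\wi(G_S) \leq \wi(G) \leq \wi(G_P) \qquad \text{for every } G \in \Omega_{C_k}^v(n),$$
so the max-min pair for $\wi$ is $(G_P, G_S)$.

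This corollary is essentially a bookkeeping step, so there is no genuine obstacle; the argument is a direct ``chain pushing'' on the poset. The only point that I would double-check is that the comparability built into the phrase \emph{max-min pair} actually holds, i.e.\ that every $G \in \Omega_{C_k}^v(n)$ is comparable to both $G_P$ and $G_S$ under $\leq_{\GGS_n}$. This is precisely the content of Lemma \ref{lem:gts_n^c_min-max} asserting that $G_P$ and $G_S$ are the \emph{unique} minimum and maximum, which rules out the existence of an element incomparable to them and thereby certifies that the chain argument applies uniformly to the whole set $\Omega_{C_k}^v(n)$.
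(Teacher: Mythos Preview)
Your proposal is correct and matches the paper's approach exactly: the paper does not give an explicit proof of this corollary but simply declares it ``an easy consequence of Lemma \ref{lem:gts_n^c_min-max} and Theorems \ref{thm:spectral_radius} and \ref{thm:wiener_index},'' which is precisely the chain-pushing argument you have spelled out in detail. Your added observation that uniqueness of the maximal and minimal elements forces every $G\in\Omega_{C_k}^v(n)$ to be comparable to both $G_S$ and $G_P$ is a useful clarification that the paper leaves implicit.
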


In the above corollary if $C_k$ is replaced by an arbitrary connected graph $H$ with $k$ vertices, then  using Remark \ref{rem:arbitrarygraph} we have a $\GGS_n$ poset on the set $\Omega_{H}^v(n)$ of unlabeled connected graphs with $n$ vertices. This poset also has monotonicity properties for the graph-theoretic parameters  given in Corollary \ref{cor:min_max_pairs}. 
Hence for these parameters, the max-min pairs can be determined in the set $\Omega_{H}^v(n)$.


Associated to a tree are different notion of Randic index, Geometric index and Atom Bond Connectivity (ABC) index. It would be nice to see the behavior of these graph-theoretic indices under the $\GGS$ operation. 


\section*{Acknowledgement}
Theorem \ref{thm:main_thm} in this work was in its conjecture form, 
tested using an open-source computer package ``SageMath''. 
We thank the authors for generously releasing the computer package SageMath as an open-source package.

\bibliographystyle{acm}
\bibliography{main}
\end{document}